\documentclass{amsart}
\usepackage{amsmath}
\usepackage{amsthm}
\usepackage{amsfonts,mathrsfs}
\usepackage{amssymb}
\usepackage{graphicx}
\usepackage{enumitem}
\usepackage{color}
\usepackage{verbatim}
\theoremstyle{plain}
\newtheorem{theorem}{Theorem}[section]
\newtheorem{lemma}[theorem]{Lemma}
\newtheorem{example}[theorem]{Example}

\newtheorem{proposition}[theorem]{Proposition}

\theoremstyle{definition}

\newtheorem{problem}[theorem]{Problem}
\newtheorem{result}[theorem]{Result}

\newtheoremstyle{TheoremNum}
	{\topsep}{\topsep}              
  {\itshape}                      
  {}                              
  {\bfseries}                     
  {.}                             
  { }                             
  {\thmname{#1}\thmnote{ \bfseries #3}}
\newtheorem{remark}[theorem]{Remark}

\newcommand{\Z}{\mathbb Z}

\newcommand{\cC}{\mathcal C}

\newcommand{\RN}[1]{%
  \textup{\uppercase\expandafter{\romannumeral#1}}%
}

 \def\zhou#1 {\fbox {\footnote {\ }}\ \footnotetext { From Yue: {\color{red}#1}}}

 \def\chen#1 {\fbox {\footnote {\ }}\ \footnotetext { From Chen: {\color{blue}#1}}}


\begin{document}
	\title{On almost perfect linear Lee codes of packing radius $2$}
	\author[X. Xu]{Xiaodong Xu\textsuperscript{\,1}}
	\address{\textsuperscript{1}Guangxi Academy of Sciences, Nanning, 530007, P.R.China.}
	\email{xxdmaths@sina.com}
	\author[Y. Zhou]{Yue Zhou\textsuperscript{\,2, $\dagger$}}
	\address{\textsuperscript{2}Department of Mathematics, National University of Defense Technology, 410073 Changsha, China}
	\address{\textsuperscript{$\dagger$}Corresponding author}
	\email{yue.zhou.ovgu@gmail.com}
	\keywords{Lee metric, perfect code, Golomb-Welch conjecture, Cayley graph}
	\thanks{The extended abstract \cite{xu_wcc_2022} of an earlier version of this paper was presented in the 12th International Workshop on Coding and Cryptography (WCC) 2022.}
	\date{\today}
	\begin{abstract}
		More than 50 years ago, Golomb and Welch conjectured that there is no perfect Lee codes $C$ of packing radius $r$ in $\mathbb{Z}^{n}$ for  $r\geq2$ and $n\geq 3$. Recently, Leung and the second author proved that if $C$ is linear, then the Golomb-Welch conjecture is valid for $r=2$ and  $n\geq 3$. In this paper, we consider the classification of linear Lee codes with the second-best possibility, that is the density of the lattice packing of $\Z^n$ by Lee spheres $S(n,r)$ equals $\frac{|S(n,r)|}{|S(n,r)|+1}$. We show that, for $r=2$ and $n\equiv 0,3,4 \pmod{6}$,  this packing density can never be achieved.
	\end{abstract}
	\maketitle
	
\section{Introduction}
Let $\Z$ denote the ring of integers. For two words $x=(x_1,\cdots, x_n)$ and $y=(y_1,\cdots, y_n)\in \Z^n$, the \emph{Lee distance} (also known as $\ell_1$-norm, taxicab metric, rectilinear distance or Manhattan distance) between them is defined by
\[d_L(x,y)=\sum_{i=1}^n |x_i-y_i| \text{ for }x,y\in\Z^n.\]

A \emph{Lee code} $C$ is just a subset of $\Z^n$ endowed with the Lee distance. If $C$ further has the structure of an additive group, i.e.\ $C$ is a lattice in $\Z^n$, then we call $C$ a \emph{linear Lee code}. Lee codes have many practical applications, for example, constrained and partial-response channels \cite{roth_lee-metric_1994}, flash memory \cite{schwartz_quasi-cross_2012} and interleaving schemes \cite{blaum_interleaving_1998}.

The minimum distance between any two distinct elements in $C$ is called the \emph{minimum distance} of $C$. Given a Lee code of minimum distance $2r+1$, for any $x\in \Z^n$, if one can always find a unique $c\in C$ such that $d_L(x,c)\leq r$, then $\cC$ is called a \emph{perfect code}. This is equivalent to 
$$\Z^n =\dot{\bigcup}_{c\in C} (S(n,r)+c),$$
where 
\[S(n,r):= \left\{(x_1,\cdots,x_n)\in \Z^n: \sum_{i=1}^n |x_i|\leq r \right\}\] 
and $S(n,r)+c := \{v+c: v\in S(n,r) \}$. Thus, the existence of a perfect Lee code implies a tiling of $\Z^n$ by Lee spheres of radius $r$.

Perfect Lee codes exist for $n=1,2$ and any $r$, and for $n\geq 3$ and $r=1$. Golomb and Welch \cite{golomb_perfect_1970} conjectured that there are no more perfect Lee codes for other choices of $n$ and $r$. This conjecture is still far from being solved, despite many efforts and various approaches applied on it. We refer the reader to the recent survey \cite{horak_50_2018} and the references therein. For perfect codes with respect to other metrics, see the very recent monograph \cite{etzion_perfect_2022} by Etzion.

\begin{figure}[h!]
	\centering
	\includegraphics[width=0.8\linewidth]{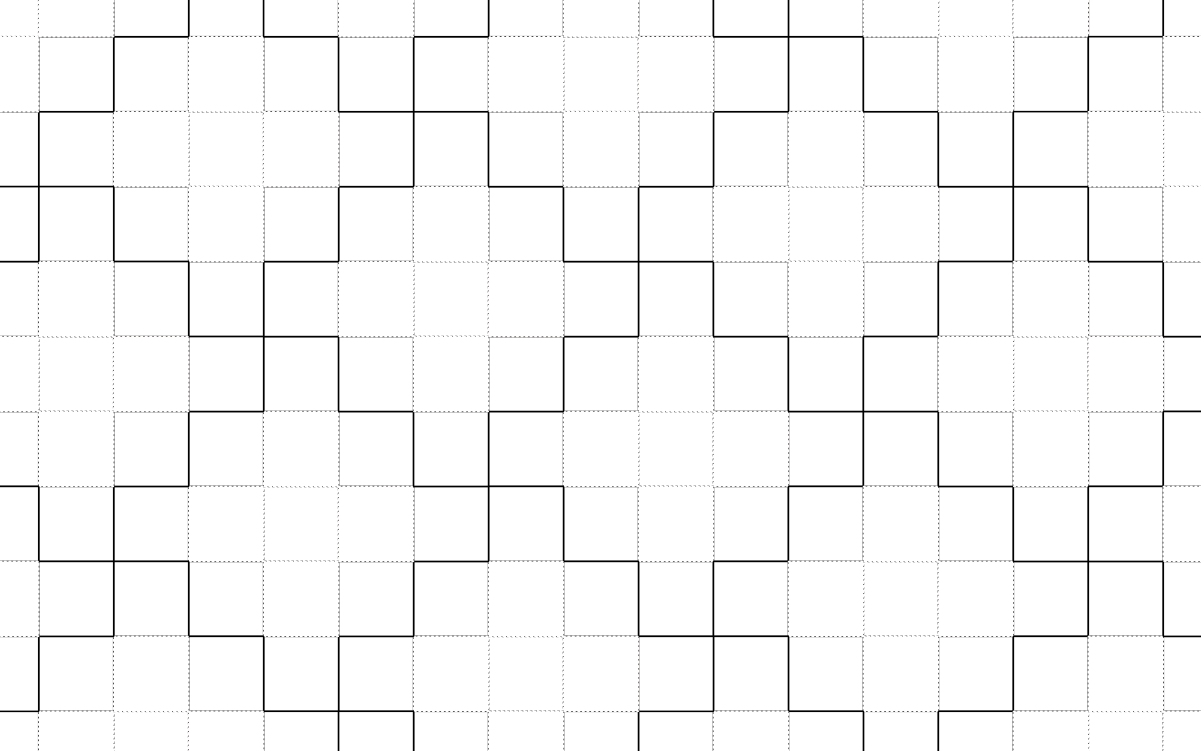}
	\caption{Tiling of $\mathbb{R}^2$ by $L(2,2)$}
	\label{fig:tiling}
\end{figure}

Let $L(n,r)$ denote the union of $n$-cubes centered at each point of $S(n,r)$ in $\mathbb{R}^n$. It is not difficult to see that there is a tiling of $\Z^n$ by $S(n,r)$ if and only if there is a tiling of $\mathbb{R}^n$ by $L(n,r)$. Figure \ref{fig:tiling} shows a (lattice) tiling of $\mathbb{R}^2$ by $L(2,2)$. As the shape of $L(n,r)$ is close to a cross-polytope when $r$ is large enough, one can use the cross-polytope packing density to prove the Golomb-Welch conjecture provided that $r$ is large enough compared with $n$. In fact, this idea was first applied by Golomb and Welch themselves in \cite{golomb_perfect_1970}. There are some other geometric approaches,  including the analysis of some local configurations of the boundary of Lee spheres in a tiling by Post \cite{post_nonexistence_1975}, and the density trick by Astola \cite{astola_elias-type_1982} and Lepist\"o \cite{lepisto_modification_1981}.

However, it seems that the geometric approaches do not work for small $r$ and large $n$. In the past several years, algebraic approaches have been proposed and applied on the existence of perfect linear Lee code for small $r$. In \cite{kim_2017_nonexistence}, Kim introduced a symmetric polynomial method to study this problem for sphere radius $r = 2$. This approach has been extended by Zhang and Ge \cite{zhang_perfect_lp_2007}, and Qureshi \cite{qureshi_nonexistence_ZGcondition_2020} for $r\geq 3$. See \cite{horak_algebraic_2016,horak_50_2018,qureshi_non-existence_2018,zhang_nonexistence_2019} for other related results. In particular, Leung and the second author \cite{leung_lattice_2020} succeeded in getting a complete solution to the case with $r=2$: there is no perfect linear Lee code of minimum distance $5$ in $\Z^n$ for $n\geq 3$. 

It is worth pointing out that the existence of a perfect linear Lee code of minimum distance $2r+1$ in $\Z^n$  is equivalent to an abelian Cayley graph of degree $2n$ and diameter $r$ whose number of vertices meets the so-called \emph{abelian Cayley Moore bound}; see \cite{camarero_quasi-perfect_lee_2016,zhang_nonexistence_2019}. For more results about the degree-diameter problems in graph theory, we refer to the survey \cite{miller_moore_2013}.

It is clear that a perfect Lee code $C$ means the packing density of $\Z^n$ by $S(n,r)$ with centers consisting of all the elements in $C$ is $1$. If there is no perfect linear Lee code for $r\geq 2$ and $n\geq 3$, one may wonder whether the second best is possible, which is about the existence of a lattice packing of $\Z^n$ by $S(n,r)$ with density $\frac{|S(n,r)|}{|S(n,r)|+1}$. 
We call such a linear Lee code \emph{almost perfect}.  In Figure \ref{fig:Z_14}, we present a lattice packing of $\Z^2$ by $S(2,2)$, and its packing density is $\frac{|S(2,2)|}{|S(2,2)|+1}=\frac{13}{14}$. This example and the numbers labeled on the cubes will be explained later in Example \ref{ex:G_n=1,2}. For convenience, we abbreviate the term almost perfect linear Lee code to \emph{APLL} code. 

\begin{figure}[h!]
	\centering
	\includegraphics[width=0.8\textwidth]{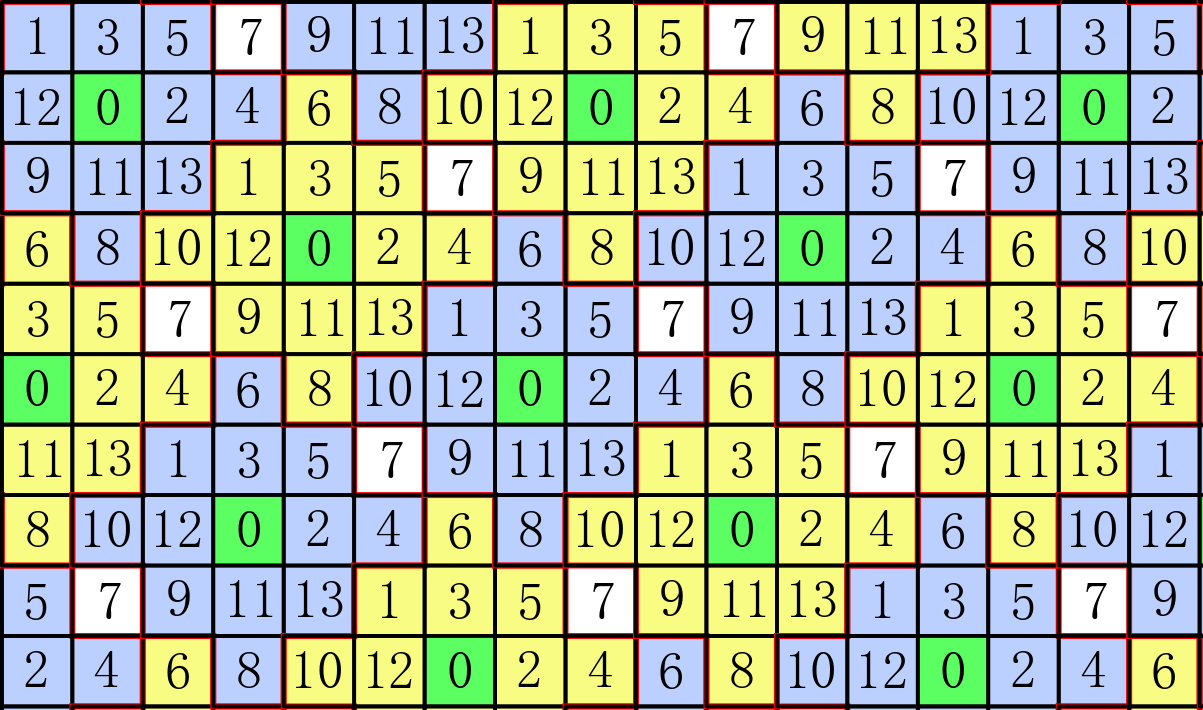}
	\caption{An almost perfect linear Lee code in $\Z^2$}
	\label{fig:Z_14}
\end{figure}

The \emph{packing radius} of a Lee code $C$ is defined to be the largest integer $r'$ such that for any element $w\in\Z^n$  there exists at most one codeword $c\in C$ with $d_L(w,c)\leq r'$. The \emph{covering radius} of a Lee code $C$ is the smallest integer $r''$ such that for any element $w\in\Z^n$  there exists at least one codeword $c\in C$ with $d_L(w,c)\leq r''$.  It is clear that a Lee code is perfect if and only if its packing radius and covering radius are the same.
For an APLL code with packing density $\frac{|S(n,r)|}{|S(n,r)|+1}$,  its 
packing radius and covering radius are $r$ and $r+1$, respectively, and its 
minimum distance  is $2r+1$ for $n>1$ and $2r+2$ for $n=1$.

A Lee code is called \emph{$t$-quasi-perfect} if its packing radius and 
covering radius are $t$ and $t+1$, respectively. Note that an APLL code is  a quasi-perfect Lee code, but a quasi-perfect Lee code is not necessarily almost perfect. For instance, two families of $2$-quasi-perfect codes in $\Z^n$ are constructed in \cite{camarero_quasi-perfect_lee_2016} and the packing density equals $\frac{2n^2+2n+1}{(2n \pm 1)^2}$, 
where $n=2[\frac{p}{4}]$ for a prime $p$ satisfying $p\geq 7$ and $p\equiv \pm 5\pmod{12}$.
In particular, the packing density equals $\frac{41}{49}$  when $n=4$, and it tends to $\frac{1}{2}$ for big $n$.  We refer to 
\cite{albdaiwi_quasi-perfect_2003,camarero_quasi-perfect_lee_2016,mesnager_2-correcting_2018} for more constructions and other results on quasi-perfect Lee codes.

About the existence of APLL codes of packing radius $2$, one can apply the symmetric polynomial method in \cite{kim_2017_nonexistence} and the algebraic number theory approach in \cite{zhang_nonexistence_2019} to derive some partial results. For $n\leq 10^5$, one can exclude the existence of APLL codes of minimum distance $5$ for $76,573$ choices of $n$. For more details, see \cite{he_nonexistence_abelian_2021}. 

The main result of this paper is the following one.
\begin{theorem}\label{th:main}
	Let $n$ be a positive integer. If $n\equiv 0,3,4\pmod{6}$, 
	then there exists no almost perfect linear Lee code of packing radius 
	$2$.
\end{theorem}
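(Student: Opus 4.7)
The plan is to combine a character-sum identity on the quotient group $G = \Z^n/\Lambda$ of a hypothetical APLL code with a case analysis of the unique ``hole'' $g_0 \in G$. Since $|G| = |S(n,2)|+1 = 2(n^2+n+1)$ and $n^2+n+1$ is odd, the Sylow-$2$ subgroup of $G$ is cyclic of order $2$, so $G$ contains a unique element of order $2$. The packing condition $S(n,2) \cup \{g_0\} = G$ yields, for every nontrivial character $\chi$ of $G$,
\[
\sum_{v \in S(n,2)} \chi(v) \;=\; -\chi(g_0).
\]
Because $\chi(-e_i) = \overline{\chi(e_i)}$, the left-hand side is real; hence $\chi(g_0) \in \{\pm 1\}$ for every nontrivial $\chi$, which by Pontryagin duality forces $2g_0 = 0$. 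Thus $g_0$ must be precisely the unique involution of $G$, a strong structural constraint that pins $g_0$ down once $G$ is known.

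Next, a representative $w \in \Z^n$ of $g_0$ of minimal Lee norm has $\|w\|_L = 3$ (the covering radius is exactly $3$). After a coordinate permutation and sign changes, $w$ belongs to one of three types: \textbf{(I)} $w = 3e_1$; \textbf{(II)} $w = 2e_1 + e_2$; \textbf{(III)} $w = e_1 + e_2 + e_3$. In type (I), the cyclic subgroup $\langle e_1 \rangle$ of $G$ contains the pairwise distinct elements $0, \pm e_1, \pm 2e_1$ and also $3e_1 = g_0 \neq 0$, while $6e_1 = 2w = 0$; hence $e_1$ has order exactly $6$, forcing $6 \mid |G|$ and therefore $n \equiv 1 \pmod 3$. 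This \emph{already} rules out type (I) when $3 \mid n$, i.e.\ when $n \equiv 0$ or $3 \pmod 6$.

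For the remaining cases, expand the character sum in the real variables $x_i = \chi(e_i) + \chi(e_i)^{-1}$:
\[
\Bigl(\sum_i x_i\Bigr)^{\!2} + \sum_i x_i^2 + 2\sum_i x_i \;=\; 4n - 2\bigl(1 + \chi(g_0)\bigr).
\]
For the unique order-$2$ character $\chi_2$ (so $\chi_2(g_0) = -1$ and $x_i = 2\epsilon_i$ with $\epsilon_i \in \{\pm 1\}$), this collapses to $\bigl(\sum \epsilon_i\bigr)\bigl(\sum \epsilon_i + 1\bigr) = 0$, pinning $|\{i : \epsilon_i = -1\}|$ to $n/2$ if $n$ is even and $(n+1)/2$ if $n$ is odd; the identity $\chi_2(w) = -1$ then imposes a further parity condition on the indices appearing in $w$. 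When $3 \mid n^2+n+1$, equivalently $n \equiv 1 \pmod 3$ (which covers $n \equiv 4 \pmod 6$), one also has a character $\chi_3$ of order $3$ with $\chi_3(g_0) = 1$; writing $n_k = |\{i : \chi_3(e_i) = \zeta_3^k\}|$, the expansion reduces to the Diophantine equation $(3n_0 - n)^2 + 9n_0 - 5n + 4 = 0$. Combining this Diophantine with the type-dependent conditions $\chi_3(w) = 1$ (which forces $\chi_3(e_2) = \chi_3(e_1)$ in type (II) and $\chi_3(e_1)\chi_3(e_2)\chi_3(e_3) = 1$ in type (III)) and with the $\chi_2$-multiplicities is the intended route to eliminating all three types when $n \equiv 4 \pmod 6$.

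The most delicate step will be the elimination of types (II) and (III) when $3 \mid n$, since no character of order $3$ is then available and the order-$2$ character alone does not suffice. Here the tool I would use is the \emph{pairing structure} induced by $g_0$: the identity $S(n,2) \cup \{g_0\} = G$ implies that the $2n^2 + 2n$ nonzero elements of $S(n,2)$ split into $n^2 + n$ pairs $\{v,v'\}$ with $v - v' \equiv g_0 \pmod \Lambda$, each difference having Lee norm in $\{3,4\}$. Since $\Lambda$ has minimum Lee distance at least $5$ (else two distinct elements of $S(n,2)$ would collide in $G$), the list of admissible difference vectors is short and explicit for each type of $w$, and counting these pairs two ways --- combinatorially via the allowed lifts of $g_0$, and algebraically via character sums at odd prime divisors of $n^2+n+1$ --- is the natural way to extract a divisibility obstruction that should fail precisely when $n \equiv 0$ or $3 \pmod 6$, completing the proof.
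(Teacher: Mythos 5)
Your preliminary reductions are correct and agree with what the paper establishes: the hole $g_0$ is forced to be the unique involution of $G$ (the paper's Lemma 2.2 proves this by applying $x\mapsto x^{(-1)}$ to the group-ring equation), a minimal representative $w$ has Lee norm $3$ and falls into your three types, and your two character identities are valid instances of Kim's symmetric-polynomial method. The problem is that everything after that is a plan rather than a proof, and the plan does not close. For $n\equiv 4\pmod 6$, the Diophantine condition $(3n_0-n)^2+9n_0-5n+4=0$ is solvable for infinitely many such $n$ (writing $3n_0-n=2t$ gives $n=2t^2+3t+2$, e.g.\ $n=16,46,\dots$, all $\equiv 4\pmod 6$ for suitable $t$), and the supplementary constraints $\chi_3(w)=1$ and the $\chi_2$-balance pin down only one or two coordinates of $(\chi_3(e_i))_i$ and a global count of signs; you never exhibit a contradiction, and the paper explicitly records that this character/symmetric-polynomial route (as carried out in the cited work of He) excludes only $76{,}573$ of the first $10^5$ dimensions. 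For $3\mid n$ the situation is worse: since $3\nmid n^2+n+1$ there is no order-$3$ character at all, the odd prime divisors of $n^2+n+1$ are not controlled by $n\bmod 6$ (the quantity can even be prime), and your proposed two-way count of the $g_0$-pairing is stated only as something that ``should'' yield a divisibility obstruction. Note also that the differences $v-v'$ in your pairing need only be congruent to $w$ modulo the lattice, and $\lVert v-v'-w\rVert$ can be as large as $7$ while the minimum lattice norm is only $5$, so the list of admissible differences is not as constrained as you suggest.

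The paper's actual proof works where characters cannot: it passes to the index-$2$ subgroup $H$, derives the near-factorization $T_0T_1=H-e$ and the companion equation for $T_0^2+T_1^2$, and then studies the integer coefficients of $\hat T^{(2)}T_0$ and $\hat T^{(2)}T_1$ \emph{reduced modulo $3$} --- a reduction of coefficients in $\Z[H]$ that is available for every $n$, unlike an order-$3$ character. The contradiction comes from a delicate inclusion--exclusion determination of the multiplicity classes $X_i,Y_i$ (Lemmas 3.1--3.4) combined with parity and size constraints, followed by a structural analysis of the set $X_0$ (resp.\ $Y_0$) of uncovered elements. To repair your argument you would need to replace the missing order-$3$ character by this mod-$3$ coefficient analysis, which is an essentially different and substantially more involved mechanism; as it stands, the two cases that constitute the content of the theorem are not proved.
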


As in \cite{leung_lattice_2020}, our proof is given by investigating group ring equations in $\Z[G]$ where $G$ is of order $|S(n,2)|+1=2(n^2+n+1)$. However, the situation here is different from the one appeared in \cite{leung_lattice_2020}. We consider a special subset $T\subseteq G$ and show that $T$ splits into two disjoint subsets  $T_0\subseteq H$ and $fT_1\subseteq fH$
where $H$ is the unique subgroup of $G$ of index $2$ and $f$ is the unique involution in $G$. 
Then,  we analyze the elements appearing in $(T_0^{(2)} +T_1^{(2)})T_0$ and $(T_0^{(2)} +T_1^{(2)})T_1$. 
Unfortunately, we could not get any contradiction when $n\equiv 1,2,5 \pmod{6}$. But we conjecture that there is no APLL code for this case when $n>2$.

The rest part of this paper consists of three sections. In Section \ref{sec:group_ring}, we convert the existence of an almost perfect linear Lee code of packing radius $2$ into some conditions in group ring. Then we prove Theorem \ref{th:main} in Section \ref{sec:main}. In Section \ref{sec:concluding}, we conclude this paper with several remarks and research problems.

\section{Necessary and sufficient conditions in group ring}\label{sec:group_ring}
Let $\Z[G]$ denote the set of formal sums $\sum_{g\in G} a_g g$, where $a_g\in \Z$ and $G$ is a finite multiplicative group. The addition and the multiplication on $\Z[G]$ are defined by
$$\sum_{g\in G} a_g g +\sum_{g\in G} b_g g :=\sum_{g\in G} (a_g+b_g) g,$$
and
$$(\sum_{g\in G} a_g g )\cdot (\sum_{g\in G} b_g g) :=\sum_{g\in G} (\sum_{h\in G} a_hb_{h^{-1}g})\cdot g,$$
for $\sum_{g\in G} a_g g, \sum_{g\in G} b_g g\in\Z[G]$. Moreover,
$$\lambda \cdot(\sum_{g\in G} a_g g ):= \sum_{g\in G} (\lambda a_g) g $$
for $\lambda\in\Z$ and $\sum_{g\in G} a_g g\in \Z[G]$.

For an element $A=\sum_{g\in G} a_g g\in \Z[G]$ and $t\in \Z$, we define 
$$A^{(t)}:=\sum_{g\in G} a_g g^t.$$
A multi-subset $D$ of $G$ can be viewed as an element $\sum_{g\in D} g\in \Z[G]$. In the rest of this paper, by abuse of notation, we will use the same symbol to denote a multi-subset of $G$ and the associated element in $\Z[G]$. Moreover, $|D|$ denotes the number of distinct elements in $D$.

The following result converts the existence of APLL codes into an algebraic combinatorics problem on abelian groups. Its proof is not difficult, and more or less the same as the proof of Theorem 6 in \cite{horak_diameter_2012}.
\begin{lemma}\label{le:Z^n_to_group}
	There is an APLL code of packing radius $r$ in $\Z^n$ if and only if 
	there is an abelian group $G$ and a homomorphism $\varphi: \Z^n \rightarrow 
	G$ such that the restriction of $\varphi$ to $S(n,r)$ is injective and 
	$G\setminus \varphi(S(n,r))$ has only one element.
\end{lemma}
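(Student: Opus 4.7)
The plan is to pass between a sublattice $C\subseteq\Z^n$ and the finite abelian quotient $G=\Z^n/C$, with $\varphi$ the canonical projection. In both directions of the equivalence, the APLL property of $C$ translates into a pair of simple statements about $\varphi$: the packing condition becomes injectivity of $\varphi|_{S(n,r)}$, while the density computation pins $|G|$ down to $|S(n,r)|+1$.

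For the forward direction, I would start with an APLL code $C$ of packing radius $r$ and set $G=\Z^n/C$, with $\varphi$ the natural projection. Since the packing density $|S(n,r)|/[\Z^n:C]$ equals $|S(n,r)|/(|S(n,r)|+1)$, one obtains $|G|=|S(n,r)|+1$. The packing-radius-$r$ condition says that for distinct $c_1,c_2\in C$ the Lee spheres $c_1+S(n,r)$ and $c_2+S(n,r)$ are disjoint, equivalently: whenever $v_1,v_2\in S(n,r)$ satisfy $v_1-v_2\in C$, one has $v_1=v_2$. This is precisely injectivity of $\varphi|_{S(n,r)}$, and then $|\varphi(S(n,r))|=|S(n,r)|=|G|-1$ forces $|G\setminus\varphi(S(n,r))|=1$.

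For the converse, set $C=\ker\varphi$. The image $\varphi(\Z^n)$ is a subgroup of $G$ containing $\varphi(S(n,r))$, so has order at least $|G|-1$; by Lagrange, this forces $\varphi$ to be surjective (the alternative $|\mathrm{im}(\varphi)|=|G|-1$ requires $|G|-1\mid|G|$, hence $|G|\le 2$, an edge case outside the range of interest $r\ge 2$). Consequently $[\Z^n:C]=|S(n,r)|+1$, giving the correct packing density, and injectivity of $\varphi|_{S(n,r)}$ translates back into disjointness of the translates $c+S(n,r)$, $c\in C$, so the packing radius of $C$ is at least $r$. To see it does not exceed $r$, I would use $|S(n,r+1)|\ge|S(n,r)|+2n>|G|$ (the $\ell_1$-sphere at distance $r+1$ already contains the $2n$ axis points $\pm(r+1)e_i$), so $\varphi$ cannot remain injective on $S(n,r+1)$.

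The lemma is essentially routine, as the paper itself notes by deferring to \cite[Theorem 6]{horak_diameter_2012}; there is no real obstacle. The two small points that require care are the Lagrange argument used to secure surjectivity of $\varphi$ in the converse, and the shell-count $|S(n,r+1)|-|S(n,r)|\ge 2n$ used to pin down the packing radius at exactly $r$.
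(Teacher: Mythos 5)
Your proposal is correct and follows essentially the same route as the paper: the forward direction takes $G=\Z^n/C$ with $\varphi$ the canonical projection and derives injectivity on $S(n,r)$ from disjointness of the translates, and the converse sets $C=\ker\varphi$. The only difference is that you spell out the converse verification (surjectivity of $\varphi$ via Lagrange and the shell count pinning the packing radius at exactly $r$), which the paper dismisses as straightforward; both added checks are valid.
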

\begin{proof}
	First, let us prove the necessary part of the condition. Assume that $\cC$ 
	is an APLL code of packing radius $r$ and $w\in \Z^n$ is an 
	arbitrary element which is not covered by the lattice packing 
	$\dot{\bigcup}_{c\in C}(S(n,r)+c)$. Then, by the definition of APLL codes, 
	$C$ is a subgroup of $\Z^n$ and
	\begin{equation}\label{eq:Z^n=split}
		\Z^n =   \left(\dot{\bigcup}_{c\in C}(S(n,r)+c)\right) \dot{\cup} (w+C). 
	\end{equation}
	Consider the quotient group $G=\Z^n / C$ and take $\varphi$ to be the canonical homomorphism from $\Z^n$ to $G$. If there are $x,y\in S(n,r)$, such that $\varphi(x)=\varphi(y)$, then $z=x-y\in C$. Since $S(n,r)$ contains $y$,  $x\in S(n,r)+z$. This means $x$ is covered by both $S(n,r)+z$ and $S(n,r)$. By \eqref{eq:Z^n=split}, $z$ must be $0$ which implies $x=y$. Therefore, $\varphi$  is injective and $\varphi(w)$ is the unique element not in $\varphi(S(n,r))$.
	
	To prove the sufficient part, one only has to define $C=\ker(\varphi)$. The verification of the almost perfect property of $C$ is straightforward.
\end{proof}

The number of elements in a Lee sphere is
\[|S(n,r)|=\sum_{i=0}^{\text{min}\{n,r\}}2^{i}\binom {n}{i}\binom {r}{i},\]
which can be found in \cite{golomb_perfect_1970} and \cite{stanton_note_1970}. In particular, when $r=2$, $|S(n,r)|=2n^2+2n+1$ and the group $G$ considered in Lemma \ref{le:Z^n_to_group} is of order $2n^2+2n+2$.

The next result translates the existence of an APLL code into a group ring condition. The same result has been proved in \cite{he_nonexistence_abelian_2021} in the context of the existence of abelian Moore Cayley graphs with excess one. For completeness, we include a proof here.

\begin{lemma}\label{le:group_ring_inter}
	There exists an APLL code of packing radius $2$ in $\Z^n$ if and 
	only if there is an abelian group $G$ of order $2(n^2+n+1)$ and an 
	inverse-closed subset $T\subseteq G$ containing $e$ with $|T|=2n+1$ such 
	that 
	\begin{equation}\label{eq:main_T^2}
	T^2 =2(G-f)-T^{(2)}+2n e,
	\end{equation}
	where  $e$ is the identity element in $G$ and $f$ is the unique element of order $2$ in $G$.
\end{lemma}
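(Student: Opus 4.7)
The plan is to reduce the statement to Lemma~\ref{le:Z^n_to_group} via a direct expansion of $T^2$ in the group ring $\Z[G]$. A preliminary observation is that $|G|=2(n^2+n+1)=2m$ with $m=n^2+n+1$ odd, so by the structure theorem $G\cong\Z/2\Z\times H$ for some abelian group $H$ of odd order, and therefore $G$ contains a unique involution, which is the element we denote by $f$.

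For the forward direction I would apply Lemma~\ref{le:Z^n_to_group} to obtain $G$ and $\varphi$, set $t_i:=\varphi(e_i)$, and put
\[T:=\{e\}\cup\{t_i^{\pm 1}:1\le i\le n\}\subseteq G.\]
Because $\varphi$ is injective on $S(n,2)\supseteq\{0,\pm e_1,\dots,\pm e_n\}$, the set $T$ is inverse-closed, contains $e$, and has $|T|=2n+1$. Writing $T=e+A$ with $A=\sum_{i=1}^n(t_i+t_i^{-1})$, the identities $(t_i+t_i^{-1})^2=t_i^2+t_i^{-2}+2e$ and $T^{(2)}=e+\sum_i(t_i^2+t_i^{-2})$, together with a bookkeeping of the abelian cross terms $t_i^{\pm 1}t_j^{\pm 1}$ for $i\neq j$, yield the key identity
\[T^2\;=\;2B-T^{(2)}+2ne,\qquad B:=\sum_{v\in S(n,2)}\varphi(v)\in\Z[G].\]
Since $\varphi(S(n,2))$ is inverse-closed (as $S(n,2)$ is) and misses exactly one element of $G$, that missing element must be a non-identity involution, hence equals $f$; therefore $B=G-f$ and \eqref{eq:main_T^2} follows.

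For the converse, given $G$ and $T$ as in the statement, I would fix an enumeration $T=\{e\}\cup\{t_i^{\pm 1}:1\le i\le n\}$ and define $\varphi:\Z^n\to G$ as the group homomorphism with $\varphi(e_i):=t_i$; the inverse-closedness of $T$ makes this assignment well-defined on $\pm e_i$. The same expansion gives $T^2=2B-T^{(2)}+2ne$ with $B=\sum_{v\in S(n,2)}\varphi(v)$, and comparing with \eqref{eq:main_T^2} forces the $\Z[G]$-equality $B=G-f$. Since $B$ has non-negative integer coefficients summing to $|S(n,2)|=|G|-1$, this equality forces every coefficient of $B$ to lie in $\{0,1\}$, which is precisely the statement that $\varphi$ is injective on $S(n,2)$ and that $\varphi(S(n,2))=G\setminus\{f\}$. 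Lemma~\ref{le:Z^n_to_group} then produces an APLL code. The only place requiring care is the bookkeeping in the expansion of $T^2$; I anticipate no deeper obstacle, which is consistent with the remark that essentially the same translation already appears in \cite{he_nonexistence_abelian_2021}.
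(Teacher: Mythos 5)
Your proposal is correct and follows essentially the same route as the paper: reduce to Lemma~\ref{le:Z^n_to_group}, expand $T^2$ in $\Z[G]$, and compare coefficients, with the identity $T^2=2B-T^{(2)}+2ne$ being exactly the computation in the paper's proof. The only cosmetic differences are that you obtain the uniqueness of the involution $f$ up front from $|G|=2(n^2+n+1)$ with odd part, whereas the paper deduces $f=f^{-1}$ by applying $x\mapsto x^{(-1)}$ to \eqref{eq:main_T^2}, and that your formal multiset element $B$ makes the two directions symmetric.
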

\begin{proof}
	The proof of \eqref{eq:main_T^2} is similar to the proof of Lemma 2.3 in 
	\cite{zhang_nonexistence_2019}. Suppose that there exists an APLL code of packing radius $2$. By Lemma \ref{le:Z^n_to_group}, we have a 
	homomorphism $\varphi:\Z^n \rightarrow G$. Define $a_i = \varphi(e_i)$ for 
	$i=1,\cdots, n$.  As the radius equals $2$, by definition, $G$ has the 
	following partition
	\[ G=\{e,f\} \dot{\bigcup} \{a_i^{\pm 1}, a_i^{\pm 2}: i= 1,\cdots, n\} \dot{\bigcup} \{a_i^{\pm 1}a_j^{\pm 1}: 1\leq i<j\leq   n\},\]
	where $f$ is the unique element of $G$ not in $\varphi(S(n,2))$.
	
	Let $T=e+ \sum_{i=1}^n a_i + \sum_{i=1}^n a_i^{-1}$. Hence $|T|=2n+1$. Moreover,
	\begin{align*}
		T^2 &= e+ 2\sum_{i=1}^n a_i + 2\sum_{i=1}^n a_i^{-1}+\left(\sum_{i=1}^n a_i\right)^2 +\left(\sum_{i=1}^n a_i^{-1}\right)^2 +2\left(\sum_{i=1}^n a_i\right)\left(\sum_{i=1}^n a_i^{-1}\right)\\
		&=(2n+1)e + 2 \left(\sum_{i=1}^n a_i + \sum_{i=1}^n a_i^{-1}\right) + \sum_{i=1}^n (a_i^2+a_i^{-2}) + 2\sum_{1\leq i<j\leq n}a_i^{\pm 1} a_j^{\pm 1}\\
		&= 2ne + 2(G-f) - T^{(2)}.
	\end{align*}
	Therefore \eqref{eq:main_T^2} is proved.
	
	Applying the map $x\mapsto x^{(-1)}$ on the both sides of \eqref{eq:main_T^2}, we get 
	\[ T^2 = 2(G-f^{-1}) - T^{(2)} + 2ne,\]
	because $T$ is inverse-closed. By comparing this equation with \eqref{eq:main_T^2}, we see $f=f^{-1}$. As $n^2+n+1$ is always odd, there is only one involution in $G$ which is $f$. 
	
	Next we prove the sufficiency part. Given an inverse-closed subset $T\subseteq G$ containing $e$ with $|T|=2n+1$ satisfying $\eqref{eq:main_T^2}$, we define a homomorphism $\varphi:\Z^n\rightarrow G$ by $\varphi(e_i)=a_i$. It is clear that  $\varphi$ is completely determined by the value of all $a_i$'s. As $T$ is inverse-closed and $|T|=2n+1$, the restriction of $\varphi$ to $S(n,1)$ is injective. Furthermore, by checking the coefficients of elements in \eqref{eq:main_T^2} as in the necessity part of the proof, we see that
	\[G\setminus \{f\} = \{e\}\dot{\bigcup} \{a_i^{\pm 1}, a_i^{\pm 2}: i= 1,\cdots, n\} \dot{\bigcup} \{a_i^{\pm 1}a_j^{\pm 1}: 1\leq i<j\leq   n\},\]
	that is $G\setminus \{f\}=\varphi(S(n,2))$. By Lemma \ref{le:Z^n_to_group}, 
	$C=\ker(\varphi)$ is  an APLL code of packing radius $2$.
\end{proof}

\begin{remark}
	Similar to the alternative proof of the uniqueness of $f\in G$ in Lemma 
	\ref{le:group_ring_inter}, we can show that if there is an APLL code of 
	packing radius $r$, then there is a group $G$ of even order with a 
	unique involution in $G$ by checking $a_{i_1}\cdots 
	a_{i_j}=a_{i_1}^{-1}\cdots a_{i_j}^{-1}$ for $j\leq r$. However, the 
	corresponding group ring equations become more complicated, and we are not 
	going to investigate them in this paper.
\end{remark}

The following two examples show that for $n=1,2$, there do exist APLL codes of packing radius $2$. The corresponding result of Example \ref{ex:G_n=1,2} 
(b) has been already depicted in Figure \ref{fig:Z_14}.
\begin{example}\label{ex:G_n=1,2}
	Let $C_m$ denote the cyclic group generated by $g$ of order $m$.
	\begin{enumerate}[label=(\alph*)]
		\item $n=1$, $G=C_6$ and $T=\{e, g^{\pm 1}\}$.
		\item $n=2$, $G=C_{14}$ and $T=\{e, g^{\pm 2}, g^{\pm 3}\}$.
	\end{enumerate}
	In Figure \ref{fig:Z_14}, we label each element in $\Z^2$ which is mapped 
	to $\varphi(e_1)=g^2$ by $2$ and those mapped to $\varphi(e_2)=g^3$ 
	by $3$. The center of every Lee sphere is labeled by $0$. The holes 
	are all labeled by $7$ which corresponds to the unique involution $g^7$ in 
	$G$.
	
	In (a) and (b), $G$  has a unique subgroup $H$ of order $n^2+n+1$. We will obtain further necessary and sufficient conditions in terms of subsets in $H$ in the later part, and the corresponding results for (a) and (b) will be given in Example \ref{ex:H_n=1,2}.
\end{example}

\begin{remark}\label{re:ex_13_14}
	It is easy to check that $\{e, g^{\pm 3}, g^{\pm 2}\}\subseteq 
	C_{13}$ defines a perfect linear Lee code in $\Z^2$ as a lattice with basis $\{(2,3),(-3,2)\}$; see Figure \ref{fig:tiling}. Accordingly the basis of the APLL code in Figure \ref{fig:Z_14} generated by $\{e, g^{\pm 3}, g^{\pm 2}\}\subseteq C_{14}$ is $\{(4,2), (3,-2)\}$.

	The isometry group of $\Z^n$ is generated by permutations of axes and 
	reflections; see \cite{horak_50_2018}. Consequently, all APLL codes of 
	packing radius $2$ in $\Z^2$ are isometric for the following reasons. By Lemma \ref{le:group_ring_inter}, $T$ must contain a primitive element of 
	$C_{14}$. Moreover, it is easy to check that if we 	assume that $e, g^{\pm 1}\in T$, then $T$ must be $\{e, g^{\pm 1}, g^{\pm 4}\}$. Thus, all $T$ satisfying Lemma 2.2 must be of the shape $\{e, g^{\pm 1}, g^{\pm 	4}\}^\psi$ for 	some group isomorphism $\psi$ of $C_{14}$. 
	Finally,  note that $T$ and $T^\psi$ provide the same Lee code in $\Z^2$ 
	for any group isomorphism $\psi$. Therefore, all APLL codes of packing 
	radius $2$ in $\Z^2$ must be isometric.
\end{remark}

In Lemma \ref{le:group_ring_inter}, we have obtained some necessary and 
sufficient condition for the existence of APLL codes of packing radius 
$2$ in terms of a subset $T$ in an abelian group $G$ of order $2(n^2+n+1)$. We 
can further split $T$ into subsets  in a subgroup $H$  and its coset $fH$.

Let $H$ denote the unique subgroup of order $n^2+n+1$ in $G$. Define $T_0=T\cap H$ and $T_1=fT\cap H$. Thus
\[  T=T_0 +fT_1.\]
By \eqref{eq:main_T^2}, 
\[ T_0^2+T_1^2+2fT_0T_1 = 2(G-f)-T_0^{(2)}-T_1^{(2)} +2n\cdot e. \]
By considering  the above equation intersecting $H$ and $fH$, respectively, we obtain
\begin{eqnarray}
\label{eq:T0T1} T_0T_1&=&H-e,\\
\label{eq:T0^2+T1^2} T_0^2+T_1^2&=&2H-T_0^{(2)}-T_1^{(2)}+2ne.
\end{eqnarray}
Therefore, we have proved the following result.
\begin{lemma}\label{le:T0_T1}
	Let $T=T_0+fT_1\subseteq G$ with $T_0$ and $T_1\subseteq H$. The subset $T$ satisfies that  $e\in T$, $T^{(-1)}=T$ and \eqref{eq:main_T^2} if and only if $e\in T_0$, $T_0^{(-1)}=T_0$, $T_1^{(-1)}=T_1$,  \eqref{eq:T0T1} and \eqref{eq:T0^2+T1^2} hold.
\end{lemma}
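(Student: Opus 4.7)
The plan is to prove both directions of the equivalence by computing $T^2$ explicitly after decomposing $T = T_0 + fT_1$ along the coset decomposition $G = H \,\dot{\cup}\, fH$. The key algebraic fact, established in Lemma \ref{le:group_ring_inter}, is $f^2 = e$ (equivalently $f^{-1} = f$). This makes the coset arithmetic clean: $H\cdot H \subseteq H$, $H\cdot fH \subseteq fH$, and $fH\cdot fH \subseteq H$. In particular, for $A, B \in \Z[H]$ one has $(A + fB)^2 = A^2 + B^2 + 2fAB$, which separates into an $H$-part $A^2 + B^2$ and an $fH$-part $2fAB$, while $(A + fB)^{(2)} = A^{(2)} + (fB)^{(2)} = A^{(2)} + B^{(2)}$ lies entirely in $\Z[H]$.

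For the forward implication, assume $e \in T$, $T^{(-1)} = T$, and \eqref{eq:main_T^2}. Since $H \cap fH$ is empty and $e \in H$, the membership $e \in T_0$ is immediate. Writing $T^{(-1)} = T_0^{(-1)} + f T_1^{(-1)}$ and matching $H$- and $fH$-parts with $T = T_0 + fT_1$ yields $T_0^{(-1)} = T_0$ and $T_1^{(-1)} = T_1$. I then expand the left side of \eqref{eq:main_T^2} as $T_0^2 + T_1^2 + 2fT_0T_1$ and rewrite the right side as $2H + 2fH - 2f - T_0^{(2)} - T_1^{(2)} + 2ne$. Comparing $H$-components produces \eqref{eq:T0^2+T1^2} directly, while the $fH$-component equation $2fT_0T_1 = 2fH - 2f$, after applying the bijection $x \mapsto fx$ on $\Z[G]$ and cancelling the factor of $2$ by comparing coefficients, simplifies to \eqref{eq:T0T1}.

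The reverse implication runs the same computation in reverse. Setting $T = T_0 + fT_1$, the conditions $e \in T$ and $T^{(-1)} = T$ follow at once from the hypotheses on $T_0$ and $T_1$ together with $f^{-1} = f$. To recover \eqref{eq:main_T^2}, I add $2f$ times \eqref{eq:T0T1} to \eqref{eq:T0^2+T1^2}:
\[ T_0^2 + T_1^2 + 2fT_0T_1 = 2H + 2fH - 2f - T_0^{(2)} - T_1^{(2)} + 2ne, \]
whose left side equals $T^2$ and whose right side equals $2(G - f) - T^{(2)} + 2ne$.

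The entire argument is bookkeeping with no substantive obstacle; the only point requiring a little care is tracking how the inverse and squaring operations interact with the coset $fH$, which becomes transparent once $f^2 = e$ is in hand. In effect, the lemma simply repackages the single group ring identity \eqref{eq:main_T^2} according to the coset decomposition $G = H \,\dot{\cup}\, fH$, splitting it into its $\Z[H]$-part and its $\Z[fH]$-part.
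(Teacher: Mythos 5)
Your proposal is correct and follows essentially the same route as the paper: the paper's own argument (given in the text immediately preceding the lemma) expands $T^2=(T_0+fT_1)^2=T_0^2+T_1^2+2fT_0T_1$ using $f^2=e$ and then separates \eqref{eq:main_T^2} into its $H$-part and $fH$-part to obtain \eqref{eq:T0T1} and \eqref{eq:T0^2+T1^2}. Your write-up is merely a bit more explicit about the inverse-closure bookkeeping and the converse direction, both of which are routine.
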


\begin{example}\label{ex:H_n=1,2}
	The following examples  of $T_0$ and $T_1$  are derived from $T$ in Example \ref{ex:G_n=1,2} (a) and (b), respectively. It is easy to check that $T_0$ and $T_1$ satisfy \eqref{eq:T0T1} and \eqref{eq:T0^2+T1^2}.
	\begin{enumerate}[label=(\alph*)]
		\item For $n=1$, $G=C_6=\langle g \rangle$ and $H=2C_{6}=\langle g^2\rangle \cong C_3$. Let $T=\{e, g^{\pm 1}\}$. Then $T_0=\{e\}$ and $T_1=\{g^2,g^4\}$.
		\item 	For $n=2$, $G=C_{14}=\langle g \rangle$ and $H=2C_{14}=\langle g^2\rangle \cong C_7$.  From $T=\{e , g^{\pm2}, g^{\pm3}\}$, one gets
		\[ T_0=\{e,g^2,g^{12}\} \text{ and }T_1=\{g^4,g^{10}\}.\]
	\end{enumerate}
\end{example}

\begin{remark}
	A \emph{near-factorization} of a group $H$ is a pair of (not necessarily inverse-closed) subsets $T_0$ and $T_1$ such that \eqref{eq:T0T1} holds. There are very little known results on them. However, it has been proved that, up to translate, $T_0$ and $T_1$ must be inverse-closed; see \cite{caen_near_1990}.
\end{remark}

The following result is a collection of obvious necessary conditions for $T_0$ and $T_1$, which will be intensively used in Section \ref{sec:main}.
\begin{lemma}\label{le:T_0T_1_nece_condi}
	Suppose that $T=T_0+fT_1\subseteq G$ satisfying $e\in T$, $T^{(-1)}=T$, $|T|=2n+1$, \eqref{eq:T0T1} and \eqref{eq:T0^2+T1^2}.  Then the following statements hold.
	\begin{enumerate}[label=(\alph*)]
		\item $e\in T_0$, $e\notin T_1$;
		\item $T_0 \cap T_1=\emptyset$ and $T_0^{(2)} \cap T_1^{(2)}=\emptyset$;
		\item $T_0\cap (T_0^{(2)}\setminus \{e\})=T_0\cap T_1^{(2)}=\emptyset$;
		\item $\{ab: a\neq b,  a,b\in T_0\}\cap T_0^{(2)} =\{e\}$; 
		\item When $n$ is odd, $|T_0|= n$ and $|T_1|=n+1$;
		\item When $n$ is even, $|T_0|= n+1$ and $|T_1|=n$;
		\item There is no common non-identity element in $T_0^2$ and $T_1^2$;
		\item $T_0\cap T_0^{(3)} = \{e\}$.
	\end{enumerate}
\end{lemma}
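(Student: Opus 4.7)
The plan is to prove all eight statements by coefficient analysis in the two group-ring identities \eqref{eq:T0T1} and \eqref{eq:T0^2+T1^2} provided by Lemma \ref{le:T0_T1}. Several uniform remarks drive every step: because $|H|=n^2+n+1$ is odd, squaring is a bijection on $H$, so the only self-inverse element of $H$ is $e$ and each coefficient of $T_i^{(2)}$ is either $0$ or $1$; inverse-closedness of $T_0$ and $T_1$ gives $[T_0^2]_e=|T_0|$ and $[T_1^2]_e=|T_1|$; and for $g\neq e$ the identity \eqref{eq:T0^2+T1^2} reduces to the clean balance
\[
[T_0^2]_g+[T_1^2]_g+[T_0^{(2)}]_g+[T_1^{(2)}]_g=2.
\]

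For parts (a)--(d) I would work one element at a time. Reading the coefficient of $e$ in \eqref{eq:T0T1} gives $|T_0\cap T_1|=0$, and combined with $e\in T_0$ this yields (a); the remaining half of (b), namely $T_0^{(2)}\cap T_1^{(2)}=\emptyset$, follows because any common non-identity square $g=t_0^2=t_1^2$ would force $[T_0^2]_g\geq 1$ and $[T_1^2]_g\geq 1$, violating the balance. For (c), an element $g\in T_0\setminus\{e\}$ already has the two distinct ordered pairs $(e,g)$ and $(g,e)$ in $T_0\times T_0$, so $[T_0^2]_g\geq 2$ and the balance forces $[T_0^{(2)}]_g=[T_1^{(2)}]_g=0$. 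Part (d) then follows by splitting $T_0^2=T_0^{(2)}+2\sum_{\{a,b\}\subseteq T_0,\,a\neq b}ab$ and equating coefficients at a general $g\in T_0^{(2)}\setminus\{e\}$: the balance gives $[T_0^2]_g=1$, so $(t_0,t_0)$ (where $t_0$ is the unique square root of $g$ in $T_0$) is the only ordered pair in $T_0\times T_0$ whose product is $g$.

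The counting statements (e)--(g) come from global identities. Applying the augmentation map to \eqref{eq:T0T1} gives $|T_0|\cdot|T_1|=n^2+n$, which together with $|T_0|+|T_1|=2n+1$ has the unique integer solution $\{|T_0|,|T_1|\}=\{n,n+1\}$; inverse-closedness combined with the odd order of $H$ forces $|T_0|$ odd and $|T_1|$ even, so (e) and (f) are determined by the parity of $n$. For (g) I would compute $[T_0^2T_1^2]_e$ in two ways: on one hand $T_0^2T_1^2=(T_0T_1)^2=(H-e)^2=(|H|-2)H+e$, giving $[T_0^2T_1^2]_e=|H|-1=n^2+n$; on the other hand, inverse-closedness of $T_1^2$ makes this coefficient equal to $\sum_g[T_0^2]_g[T_1^2]_g$, whose $g=e$ summand alone already contributes $|T_0||T_1|=n^2+n$. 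The remaining terms $\sum_{g\neq e}[T_0^2]_g[T_1^2]_g$ must therefore vanish, and since each is non-negative, $[T_0^2]_g[T_1^2]_g=0$ for every $g\neq e$, which is exactly (g).

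The step that will need the most care is (h). Here I would argue by contradiction: assume $g\in T_0\cap T_0^{(3)}$ with $g\neq e$, write $g=t_0^3$ for some $t_0\in T_0\setminus\{e\}$, and, instead of working with $g$ directly, pivot to the auxiliary element $h=t_0^2$. Since $h\in T_0^{(2)}\setminus\{e\}$, part (b) and the balance equation force $[T_0^2]_h\leq 1$. However, the three ordered pairs $(t_0,t_0)$, $(g,t_0^{-1})$ and $(t_0^{-1},g)$ all lie in $T_0\times T_0$ and have product $t_0^2$, and a short check using $t_0\neq t_0^{-1}$ and $t_0^4\neq e$ (both consequences of the odd order of $H$) shows these pairs are pairwise distinct, yielding $[T_0^2]_h\geq 3$, a contradiction. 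The difficulty here is spotting the right auxiliary element, since bounding $[T_0^2]_g$ directly gives only $[T_0^2]_g=2$ and does not rule out the cube relation $g=t_0^3$.
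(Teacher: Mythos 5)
Your proof is correct, and for parts (a)--(f) and (h) it follows essentially the same route as the paper: everything is read off from the coefficients of \eqref{eq:T0T1} and \eqref{eq:T0^2+T1^2}, with (e)--(f) using $|T_0|\,|T_1|=n^2+n$ together with the parity forced by inverse-closedness in a group of odd order, and (h) reduced to a violation of the coefficient bound at $t_0^2$ (the paper phrases this as $a^2=a^{-1}b$ contradicting (d), which is the same contradiction located at the same auxiliary element). The one genuine divergence is (g): the paper simply observes that a common non-identity element of $T_0^2$ and $T_1^2$ would make the coefficient sum in \eqref{eq:T0^2+T1^2} exceed $2$, whereas you compute $[T_0^2T_1^2]_e$ two ways via $(T_0T_1)^2=(H-e)^2$ and note that the $g=e$ term $|T_0||T_1|=n^2+n$ already exhausts the total. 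Your identity is a pleasant global argument that does not even need \eqref{eq:T0^2+T1^2}, only \eqref{eq:T0T1} and inverse-closedness; the paper's local coefficient check is shorter but uses both equations. Two small cosmetic points: in (b) the disjointness $T_0^{(2)}\cap T_1^{(2)}=\emptyset$ follows in one line from the fact that $x\mapsto x^2$ is an automorphism of $H$ (the paper's route), so the balance equation is not needed there; and in (d) the conclusion ``$=\{e\}$'' tacitly assumes $|T_0|>1$, an edge case already implicit in the paper's statement.
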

\begin{proof}
	By the definition of $T$ and $T_0$, we have (a). From \eqref{eq:T0T1} and the inverse-closed property of $T_1$, we derive $T_0 \cap T_1=\emptyset$. As $\gcd(2, |H|)=1$,  the map $x\mapsto x^2$ on $H$ is a group automorphism. Thus $T_0^{(2)} \cap T_1^{(2)}=\emptyset$ and (b) is proved.
	
	Note that $T_0$, $T_0^{(2)}\setminus\{e\}$ and $T_1^{(2)}$ are all subsets in $H$.  By checking \eqref{eq:T0^2+T1^2}, we know that they cannot share any common element. By the same reason, we also prove (d).
	
	From \eqref{eq:T0T1}, we get $|T_0|\cdot |T_1| = n(n+1)$. Notice that $|T_0|$ must be odd because it has an involution $x\mapsto x^{-1}$ with a unique fixed element $e$. Together with $|T_0|+|T_1|=2n+1$, we derive (e) and (f). By checking the elements appeared in $T_0^2+T_1^2$ in  \eqref{eq:T0^2+T1^2}, we get (g).
	
	Finally, we prove (h). It is clear that $e\in T_0\cap T_0^{(3)}$. Assume to the contrary that there exists $b\in T_0\setminus \{e\}$ such that $b=a^3$ for some $a\in T_0$. Then $ a^2=a^{-1}b\in T_0^2 $. As $a^{-1}\neq b$, this is a contradiction to (d).
\end{proof}

In \cite{he_nonexistence_abelian_2021}, several different approaches have been applied to derive various nonexistence results. In particular, we need the following one in the next section.
\begin{proposition}[Corollary 3.1 in \cite{he_nonexistence_abelian_2021}]
	\label{prop:known}
	Suppose that $8n-7$ is not a square in $\Z$ and one of the following conditions are satisfied:
	\begin{itemize}
		\item $3,7,19$ or $31$ divides $n^2+n+1$;
		\item $13 \mid n^2+n+1$ and $8n-11\notin \{13k^2 : k\in\Z \}$.
	\end{itemize}
	Then there is no  inverse-closed subset $T\subseteq G$ satisfying Lemma \ref{le:group_ring_inter}.
\end{proposition}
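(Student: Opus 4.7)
The plan is to use character theory on $G = H \times \langle f \rangle$ (the decomposition is valid because $|H| = n^2+n+1$ is odd) applied to the group-ring equation \eqref{eq:main_T^2}. Every character of $G$ factors as $\chi = \chi_H \otimes \chi_f$; applying $\chi$ to \eqref{eq:main_T^2} for $\chi_H$ non-trivial gives $\chi(T)^2 + \chi(T^{(2)}) = 2n - 2\chi(f)$. Writing $T = T_0 + fT_1$ and setting $U := T_0 + T_1 \in \mathbb{Z}[H]$, a short computation—combining $\chi_H(T_0)\chi_H(T_1) = -1$ from \eqref{eq:T0T1} with the identity $\chi(T^{(2)}) = \chi_H^2(U)$—collapses both cases $\chi_f = \pm 1$ into the single master identity
\begin{equation}\label{eq:master}
\chi_H(U)^2 + \chi_H^2(U) = 2n - 2.
\end{equation}
Since $U$ is inverse-closed, $u := \chi_H(U)$ is real, and whenever $\chi_H$ has order $p$ we have $u \in \mathbb{Z}[\zeta_p] \cap \mathbb{R}$, so \eqref{eq:master} reads $u^2 + \sigma(u) = 2n - 2$ for the Galois automorphism $\sigma\colon \zeta_p \mapsto \zeta_p^2$ of $\mathbb{Q}(\zeta_p)/\mathbb{Q}$.

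For $p = 3$, the map $\sigma$ coincides with complex conjugation, so $u$ real forces $u \in \mathbb{Z}$ with $u^2 + u = 2n - 2$; this has an integer root if and only if $(2u+1)^2 = 8n - 7$, contradicting the hypothesis that $8n-7$ is not a square. For $p \in \{7, 19, 31\}$, we iterate: the sequence $u_k := \sigma^k(u)$ forms a cycle of length $d := \mathrm{ord}_p(2)$ of real algebraic integers satisfying $u_k^2 + u_{k+1} = 2n - 2$. Subtracting consecutive equations and telescoping the resulting relations $(u_{k-1} - u_k)(u_{k-1} + u_k) = -(u_k - u_{k+1})$ over one period yields the dichotomy: either $u_0 = u_1 = \cdots = u_{d-1}$—in which case $u \in \mathbb{Z}$ is a root of $x^2 + x - (2n-2) = 0$, again forcing $8n-7$ to be a square, contradiction—or $\prod_{k=0}^{d-1}(u_k + u_{k+1}) = (-1)^d$. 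The "distinct conjugates" branch is excluded by combining this product identity with the Newton-type trace relation $(\sum_k u_k)^2 + \sum_k u_k - 2\sum_{i<j} u_i u_j = d(2n-2)$, the amplitude bound $|u_k| \leq |U| = 2n+1$, and integrality of the elementary symmetric functions of the $u_k$; a finite case-check in the small cyclotomic ring $\mathbb{Z}[\zeta_p]$ finishes each of $p = 7, 19, 31$.

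The case $p = 13$ is more delicate because $\mathbb{Q}(\zeta_{13})$ contains the real quadratic subfield $\mathbb{Q}(\sqrt{13})$, giving the "distinct conjugates" branch one additional degree of freedom that the preceding argument does not kill. The strategy is to descend $u$ through the tower $\mathbb{Q} \subset \mathbb{Q}(\sqrt{13}) \subset \mathbb{Q}(\zeta_{13})^+$ and show that the residual obstruction is equivalent to solvability of a norm equation over $\mathbb{Z}[(1+\sqrt{13})/2]$ whose only integer solutions force $8n - 11 = 13k^2$ for some $k \in \mathbb{Z}$. The supplementary hypothesis $8n-11 \notin \{13k^2\}$ then rules out these residual solutions and closes the argument.

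The main obstacle is precisely the "distinct Galois conjugates" branch for $p > 3$: the product identity, the trace relation, the amplitude bound, and integrality of the symmetric functions together must carry out a finite arithmetic check in $\mathbb{Z}[\zeta_p]$, and these combined constraints are tight enough only for the specific small primes $p \in \{3, 7, 19, 31\}$; for $p = 13$ they leave exactly one residual family of potential solutions, parametrised by whether $(8n-11)/13$ is a rational square, which explains the refined hypothesis in the second bullet of the proposition. Extending the method beyond this list of primes would require a genuinely new input, most likely from the explicit arithmetic of higher cyclotomic fields or class field theory.
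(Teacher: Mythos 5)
This paper does not actually prove Proposition~\ref{prop:known}: it is imported verbatim from Corollary~3.1 of \cite{he_nonexistence_abelian_2021}, so there is no internal proof to compare yours against. Judged on its own terms, your reduction is sound and is essentially the method used in that reference (Kim's symmetric-polynomial/character approach). The master identity $\chi_H(U)^2+\chi_H(U^{(2)})=2n-2$ is correct in both cases $\chi(f)=\pm1$ (the $-1$ case using $\chi_H(T_0)\chi_H(T_1)=-1$), the identification $\chi_H(U^{(2)})=\sigma(\chi_H(U))$ with $\sigma\colon\zeta_p\mapsto\zeta_p^2$ is right, the $p=3$ case is complete ($\sigma$ is complex conjugation, so $u\in\Z$ and $(2u+1)^2=8n-7$, contradicting the hypothesis), and the telescoping dichotomy between the constant orbit and $\prod_k(u_k+u_{k+1})=(-1)^d$ is correctly derived.

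The genuine gap is everything after that, which is to say the entire content of the remaining four primes. For $p\in\{7,19,31\}$ the exclusion of the non-constant branch is the whole difficulty, and you dispose of it with ``a finite case-check in $\Z[\zeta_p]$ finishes each''; no such check is exhibited, and it is not finite in any obvious sense since $n$ is unbounded. Moreover, one of the tools you invoke is false for $p=31$: since $\mathrm{ord}_{31}(2)=5$ while $[\mathbb{Q}(\zeta_{31})^{+}:\mathbb{Q}]=15$, the fixed field of $\sigma$ on the real subfield is a cubic field, so the elementary symmetric functions of $u_0,\dots,u_4$ need not be rational integers; even the constant branch for $p=31$ needs the extra remark that a root of $x^2+x-(2n-2)$ has degree at most $2$ over $\mathbb{Q}$ while every subfield of $\mathbb{Q}(\zeta_{31})^{+}$ has odd degree. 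For $p=13$ your paragraph is a statement of intent rather than an argument: you correctly locate the source of the exceptional hypothesis (an orbit of length $2$ inside $\mathbb{Q}(\sqrt{13})$ forces $u+\sigma(u)=1$ and hence $13k^2=8n-11$), but the orbits of length $3$ and $6$, which are a priori possible since $[\mathbb{Q}(\zeta_{13})^{+}:\mathbb{Q}]=6$, are never addressed. As it stands the proposal establishes only the $3\mid n^2+n+1$ bullet; for the remaining primes you would need to actually carry out the arguments of \cite{he_nonexistence_abelian_2021}, or of \cite{kim_2017_nonexistence} on which they are modelled.
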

Although Proposition \ref{prop:known} does not work for $n=3$, this situation can be excluded by a direct verification because the group $G$ is small.

The symmetric polynomial method applied by Kim \cite{kim_2017_nonexistence} on the existence of perfect linear Lee codes can also be used directly here to derive strong nonexistence results; see Theorem 3.1 in \cite{he_nonexistence_abelian_2021}. In particular, it leads to the following results appeared in \cite{he_nonexistence_abelian_2021}.
\begin{result}\label{result:prime}
	For $3<n\leq 10^5$, if $n^2+n+1$ is a prime, then  there is no  inverse-closed subset $T\subseteq G$ satisfying Lemma \ref{le:group_ring_inter}.
\end{result}

\section{Proof of the main result}\label{sec:main}

In this section, we are going to prove Theorem \ref{th:main} by showing the nonexistence of inverse-closed subsets $T_0,T_1\subseteq H$ satisfying $e\in T_0$, $|T_0|+|T_1|=2n+1$, \eqref{eq:T0T1} and \eqref{eq:T0^2+T1^2}.

Define $\hat{T}=T_0+T_1\in \Z[H]$. Write $\hat{T}^{(2)}=\sum_{i=0}^{2n} a_i$ where $a_0=e$, $a_i\in T_0^{(2)}$ for $i=0,\cdots, |T_0|-1$ and $a_i\in T_1^{(2)}$ for $i= |T_0|,\cdots, 2n$.  Let $k_0=|T_0|$ and $k_1=|T_1|$. 

By multiplying $T_0$ and $T_1$ on the both sides of \eqref{eq:T0^2+T1^2}, respectively, and rearranging the terms, we get
\begin{align*}
	\hat{T}^{(2)}T_0 &= (2k_0-k_1)H+T_1-T_0^3+2nT_0,\\
	\hat{T}^{(2)}T_1 &= (2k_1-k_0)H+T_0-T_1^3+2nT_1.
\end{align*}
Consider the above two equations modulo $3$ 
\begin{align*}
	\hat{T}^{(2)}T_0 &\equiv (2k_0-k_1)H+T_1-T_0^{(3)}+2nT_0 \pmod{3},\\
	\hat{T}^{(2)}T_1 &\equiv (2k_1-k_0)H+T_0-T_1^{(3)}+2nT_1 \pmod{3}.
\end{align*}
Note that $T_0^3 \equiv T_0^{(3)} \pmod{3}$. If $3\nmid |H|$, then $T_0^{(3)}$ is a subset; for $3\mid |H|$, we will show that there are only very few elements in $T_0^{(3)}$ appearing more than once in Lemma \ref{le:n=1mod3_rep}. 
Thus, we can derive some strong conditions on the coefficients of most of the elements in the right-hand side of the above two equations. For instance, when $n\equiv 1 \pmod{3}$ and $n$ is odd, the first equation becomes
\[
	\hat{T}^{(2)}T_0 \equiv T_1-T_0^{(3)}+2T_0 \pmod{3}.
\]
As $T_1$, $T_0^{(3)}$ and $T_0$ are approximately of size $n$, most of the elements in $H$ appear in $\hat{T}^{(2)}T_0$ for $3k$ times, $k=0,1,\cdots$.


Let $X_i$ ($Y_i$, resp.) be the subset of elements of $H$ appearing in $\hat{T}^{(2)} T_0$ ($\hat{T}^{(2)} T_1$, resp.) exactly $i$ times for $i=0,1,\cdots, M_0$ ($M_1$, resp.), which means $X_i$'s ($Y_i$'s, resp.) form a partition of the group $H$. In particular, we define $M_0$ and $M_1$ to be the largest integers such that $X_{M_0}$ and $Y_{M_1}$ are non-empty sets. Then
\begin{align}
	\label{eq:T^2T_0} \hat{T}^{(2)} T_0 &= \sum_{i=0}^{M_0} iX_i,\\
	\label{eq:T^2T_1} \hat{T}^{(2)} T_1 &= \sum_{i=0}^{M_1} iY_i.
\end{align}
By \eqref{eq:T^2T_0} and \eqref{eq:T^2T_1}, we have the following three conditions on the value of $|X_i|$'s and $|Y_i|$'s:
\begin{align}
\label{eq:sum_i|X_i|}	\sum_{i=1}^{M_0}i|X_i| &=(2n+1)k_0,\\
\label{eq:sum_i|Y_i|}	\sum_{i=1}^{M_1}i|Y_i| &=(2n+1)k_1,\\
\label{eq:sum_|X_i|}	\sum_{i=0}^{M_0}|X_i|=\sum_{i=0}^{M_1}|Y_i|&= n^2+n+1.
\end{align}
Some extra conditions are given by the following Lemma.
\begin{lemma}\label{le:in-ex_X_Y}
	For $X_i$ and $Y_i$ defined in \eqref{eq:T^2T_0} and \eqref{eq:T^2T_1},
	\begin{align}
	\label{eq:sum_|X_i|-0} \sum_{i=1}^{M_0}|X_i|=&(2n+1)k_0-2(k_0-1)k_0+\theta_0 + \sum_{s\geq 3} \frac{(s-1)(s-2)}{2}|X_s|,\\
	\label{eq:sum_|Y_i|-0}  \sum_{i=1}^{M_1}|Y_i|=&(2n+1)k_1-2(k_1-1)k_1 + \theta_1+ \sum_{s\geq 3} \frac{(s-1)(s-2)}{2}|Y_s|,
	\end{align}
	where $\theta_0=|(T_0^2 \setminus T_0^{(2)})\cap \hat{T}^{(4)}|$ and $\theta_1 = \frac{|T_1\cap \hat{T}^{(2)}|}{2}+|(T_1^2\setminus (T_1^{(2)} \cup \{e\}))\cap \hat{T}^{(4)}|$. 
\end{lemma}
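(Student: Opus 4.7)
The plan is to prove both identities by a unified double-counting of collision pairs in the multisets $\hat{T}^{(2)}T_0$ and $\hat{T}^{(2)}T_1$. First, the combinatorial identity $(s-1)+\binom{s-1}{2}=\binom{s}{2}$, together with \eqref{eq:sum_i|X_i|}, reduces \eqref{eq:sum_|X_i|-0} to the equivalent assertion
\[
\sum_{s\geq 2}\binom{s}{2}|X_s|=2k_0(k_0-1)-\theta_0.
\]
The left-hand side counts unordered pairs $\{(a_1,t_1),(a_2,t_2)\}\subseteq \hat{T}^{(2)}\times T_0$ with $a_1t_1=a_2t_2$ and $(a_1,t_1)\neq(a_2,t_2)$. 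Parameterizing any such pair by $z:=a_1a_2^{-1}=t_2t_1^{-1}\in H\setminus\{e\}$ (noting that $a_1=a_2$ forces $t_1=t_2$), this count equals $\tfrac{1}{2}\sum_{z\neq e}\alpha(z)\beta(z)$, where $\alpha(z)$ and $\beta(z)$ are the coefficients of $z$ in $(\hat{T}^{(2)})^2$ and in $T_0^2$ respectively (using that $\hat{T}^{(2)}$ and $T_0$ are inverse-closed).

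The second step is to evaluate $\alpha$ and $\beta$. Expanding $(\hat{T}^{(2)})^2=(T_0^{(2)})^2+2T_0^{(2)}T_1^{(2)}+(T_1^{(2)})^2=(T_0^2)^{(2)}+(T_1^2)^{(2)}+2(H-e)$, where $T_0^{(2)}T_1^{(2)}=(T_0T_1)^{(2)}=H-e$ by \eqref{eq:T0T1}, and then applying $(\cdot)^{(2)}$ to \eqref{eq:T0^2+T1^2}, one obtains $(\hat{T}^{(2)})^2=4H-\hat{T}^{(4)}+(2n-2)e$, hence $\alpha(z)=4-\mathbf{1}[z\in\hat{T}^{(4)}]$ for $z\neq e$. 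Equation \eqref{eq:T0^2+T1^2} combined with Lemma \ref{le:T_0T_1_nece_condi}(b),(g) classifies $\beta$: for $z\neq e$, $\beta(z)=1$ iff $z\in T_0^{(2)}$, $\beta(z)=2$ iff $z$ lies in the support of $T_0^2$ but outside $\hat{T}^{(2)}$, and $\beta(z)=0$ otherwise.

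Combining the above gives
\[
\sum_{z\neq e}\alpha(z)\beta(z)=4k_0(k_0-1)-\!\!\sum_{z\in\hat{T}^{(4)}\setminus\{e\}}\!\!\beta(z),
\]
and the subtracted sum splits into a $\beta=1$ part and a $\beta=2$ part. Using the bijectivity of $x\mapsto x^2$, Lemma \ref{le:T_0T_1_nece_condi}(c) (which gives $T_0\cap T_0^{(2)}=\{e\}$ and $T_0\cap T_1^{(2)}=\emptyset$) pulls back to $T_0^{(2)}\cap\hat{T}^{(4)}=\{e\}$, so the $\beta=1$ part vanishes. Lemma \ref{le:T_0T_1_nece_condi}(g) forces $\mathrm{supp}(T_0^2)\cap T_1^{(2)}\subseteq\{e\}$, which reduces the $\beta=2$ part to $2\theta_0$. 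Dividing by two yields \eqref{eq:sum_|X_i|-0}.

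The derivation of \eqref{eq:sum_|Y_i|-0} is formally identical after swapping $T_0$ and $T_1$. The only new wrinkle is that $T_1^{(2)}\cap\hat{T}^{(4)}$ need not equal $\{e\}$; under the squaring bijection its cardinality is $|T_1\cap\hat{T}^{(2)}|$, and this is precisely the source of the extra summand $\tfrac{|T_1\cap\hat{T}^{(2)}|}{2}$ in the definition of $\theta_1$. The main delicate point throughout is bookkeeping the various set intersections and the handling of the identity element $e$, but every structural fact required is already packaged in Lemma \ref{le:T_0T_1_nece_condi}.
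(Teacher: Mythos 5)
Your argument is correct, and it reaches the two identities by a genuinely different route from the paper. After the algebraic reduction via \eqref{eq:sum_i|X_i|} (which is the same step the paper performs implicitly through the identity $\binom{s}{2}-\binom{s}{1}+\binom{s}{0}=\binom{s-1}{2}$ inside the inclusion--exclusion expansion \eqref{eq:expand_T2T_0}), everything hinges on evaluating the collision count $\sum_{s\geq 2}\binom{s}{2}|X_s|$, which is exactly the paper's quantity $\sum_{i<j}|a_iT_0\cap a_jT_0|$ in \eqref{eq:aiTk_ajTk}. The paper obtains this number through Lemma \ref{le:C_k,l}: a case-by-case classification of the pairs $(a_i,a_j)$ according to $|a_iT_k\cap a_jT_k|\in\{0,1,2\}$, carried out by taking square roots and imposing an explicit ordering $a_i^{-1}=a_{k_0-i}$ to count ordered pairs. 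You instead write the ordered collision count as $\sum_{z\neq e}\alpha(z)\beta(z)$ and compute both coefficient functions in closed form from the group-ring relations: $(\hat{T}^{(2)})^2=4H-\hat{T}^{(4)}+(2n-2)e$ follows from squaring \eqref{eq:T0T1} and \eqref{eq:T0^2+T1^2}, and the classification of $\beta$ follows from \eqref{eq:T0^2+T1^2} plus Lemma \ref{le:T_0T_1_nece_condi}(g). This bypasses Lemma \ref{le:C_k,l} entirely and makes the origin of the correction terms transparent: $\theta_0$ and $\theta_1$ arise precisely from the defect $\alpha(z)=4-\mathbf{1}[z\in\hat{T}^{(4)}]$, the term $\tfrac{|T_1\cap\hat{T}^{(2)}|}{2}$ appearing because $T_1^{(2)}\cap\hat{T}^{(4)}$ need not reduce to $\{e\}$ while $T_0^{(2)}\cap\hat{T}^{(4)}=\{e\}$ does, by Lemma \ref{le:T_0T_1_nece_condi}(c). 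The trade-off is that your computation is shorter and more conceptual, whereas the paper's intersection data $C_{k,1}$, $C_{k,2}$ carry slightly more refined information (the individual counts, not just the weighted sum), though only the combination \eqref{eq:aiTk_ajTk} is ever used. One small bookkeeping point worth making explicit in a final write-up: in the $\beta=2$ part one needs $\mathrm{supp}(T_0^2)\setminus\hat{T}^{(2)}=\mathrm{supp}(T_0^2)\setminus T_0^{(2)}$ and, in the $T_1$ case, $\mathrm{supp}(T_1^2)\setminus\hat{T}^{(2)}=\mathrm{supp}(T_1^2)\setminus(T_1^{(2)}\cup\{e\})$; both follow from Lemma \ref{le:T_0T_1_nece_condi}(g) together with $e\in T_0^{(2)}$, and they are what match your count to the paper's definitions of $\theta_0$ and $\theta_1$.
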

Note that $|X_i|$'s and $|Y_i|$'s are nonnegative integers. Our main idea is to use \eqref{eq:sum_i|X_i|}	, \eqref{eq:sum_i|Y_i|}	, \eqref{eq:sum_|X_i|}, \eqref{eq:sum_|X_i|-0} and \eqref{eq:sum_|Y_i|-0} together with $\hat{T}^{(2)}T_0$ and $\hat{T}^{(2)}T_1 \pmod{3}$ to determine $|X_i|$'s and $|Y_i|$'s. For some cases, we will end up with a contradiction which means there is no $T_0$ and $T_1$ satisfying \eqref{eq:T0T1} and \eqref{eq:T0^2+T1^2}, for example, see Theorem \ref{th:n=1_even}; for the other cases, the value of $|X_i|$'s and $|Y_i|$'s together with a careful analysis of \eqref{eq:T0T1} and \eqref{eq:T0^2+T1^2} also lead to contradictions, for instance, see Theorems \ref{th:n=0_odd} and \ref{th:n=0_even}.

To prove Lemma \ref{le:in-ex_X_Y}, we need  the following fact.
\begin{lemma}\label{le:e_in_X_1}
	For $X_i$'s defined by \eqref{eq:T^2T_0}, $e\in X_1$, $|X_i|$ is even for $i\neq 1$ and $|X_1|$ is odd .
\end{lemma}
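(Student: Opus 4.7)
The plan is to establish the two assertions independently, each by a short bookkeeping argument that uses only Lemma~\ref{le:T_0T_1_nece_condi} and the fact that $\gcd(2,|H|)=1$ (so squaring and inversion are bijections on $H$).

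For the claim $e\in X_1$, I would compute the coefficient of $e$ in $\hat{T}^{(2)}T_0=(T_0^{(2)}+T_1^{(2)})T_0$ directly. This coefficient counts the $t\in T_0$ whose inverse lies in $T_0^{(2)}\cup T_1^{(2)}$; because $T_0$ is inverse-closed, that count equals $|T_0\cap(T_0^{(2)}\cup T_1^{(2)})|$. Lemma~\ref{le:T_0T_1_nece_condi}(c) gives $T_0\cap T_1^{(2)}=\emptyset$ and $T_0\cap(T_0^{(2)}\setminus\{e\})=\emptyset$, and combined with the trivial inclusion $e\in T_0\cap T_0^{(2)}$ this forces the intersection to be exactly $\{e\}$. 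Hence the coefficient of $e$ in $\hat{T}^{(2)}T_0$ is $1$, which places $e$ in $X_1$ (and nowhere else).

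For the parity statement, I would exploit symmetry under inversion. Since $T_0^{(-1)}=T_0$ and $T_1^{(-1)}=T_1$, the squared elements $T_0^{(2)}$ and $T_1^{(2)}$ are inverse-closed, hence so is $\hat{T}^{(2)}$; a product of two inverse-closed elements of $\Z[H]$ is again inverse-closed, so $(\hat{T}^{(2)}T_0)^{(-1)}=\hat{T}^{(2)}T_0$. Consequently each $X_i$ is closed under $x\mapsto x^{-1}$. Because $|H|=n^2+n+1$ is odd, $e$ is the unique self-inverse element of $H$, so $X_i\setminus\{e\}$ decomposes into disjoint pairs $\{g,g^{-1}\}$ and has even cardinality. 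Combining this with the previous step — $e\in X_1$ and $e\notin X_i$ for $i\neq 1$ — yields $|X_1|$ odd and $|X_i|$ even for every $i\neq 1$.

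There is no substantial obstacle here: the lemma is a structural bookkeeping observation that sets up \eqref{eq:sum_|X_i|-0} and \eqref{eq:sum_|Y_i|-0} in the next lemma. The only mildly subtle point is the reduction $T_0\cap T_0^{(2)}=\{e\}$, since Lemma~\ref{le:T_0T_1_nece_condi}(c) only records $T_0\cap(T_0^{(2)}\setminus\{e\})=\emptyset$; one must separately observe that $e$ lies in both sets. The argument would also transport with minor modifications to the analogous statement for $Y_i$ and $\hat{T}^{(2)}T_1$ (needed for Lemma~\ref{le:in-ex_X_Y}), the only difference being that $e\notin T_1$ shifts where the identity contribution is collected.
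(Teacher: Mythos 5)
Your proposal is correct and follows essentially the same route as the paper: the coefficient of $e$ in $\hat{T}^{(2)}T_0$ is pinned down to $1$ via Lemma~\ref{le:T_0T_1_nece_condi}(c) together with $e\in T_0\cap T_0^{(2)}$, and the parity claims follow from the inverse-closedness of each $X_i$ and the oddness of $|H|$. The only cosmetic difference is that you treat $|X_0|$ by the same inversion-pairing argument as the other $X_i$, whereas the paper deduces it from the partition of $H$; both are valid.
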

\begin{proof}
	The identity element $e$ is in $X_1$ for the following reasons: First, $e\notin T_1^{(2)}T_0$, otherwise $ T_0\cap T_1^{(2)}\neq \emptyset$ which contradicts Lemma \ref{le:T_0T_1_nece_condi} (c); second, $e$ appears in  $T_0^{(2)}T_0$ exactly once which also follows from $T_0^{(2)}\cap T_0=\{e\}$ by Lemma \ref{le:T_0T_1_nece_condi} (c).
	
	Note that $X_i=X_i^{(-1)}$ for $i\geq 1$. Hence $|X_{i}|$ is even for $i\geq 2$, and $|X_1|$ is odd because $e\in X_1$. Moreover, as $|H|=n^2+n+1$ is odd and $X_i$'s form a partition of $H$, $|X_0|$ is  even.
\end{proof}

Recall that we have defined $k_0=|T_0|$ and $k_1=|T_1|$ at the beginning of this section.
\begin{lemma}\label{le:C_k,l}
	Denote the elements in $T_0^{(2)}$ by $a_i$ with $i=0,\cdots, k_0-1$ and the elements in $T_1^{(2)}$ by $a_i$ with $i=k_0,\cdots, 2n$. In particular, let $a_0=e$.
	Then
	\[
	 |a_i T_k\cap a_j T_k| \in \{0,1,2\},
	\]
	for any $i<j$ and $k=0,1$.
	For $\ell=0,1,2$, define 
	\[
	C_{k,\ell}=\#\left\{ (a_i,a_j):  i<j, |a_i T_k\cap a_j T_k| =\ell \right\}.
	\]
	Then 
	\begin{equation*}
		C_{k,1}=\begin{cases}
			2k_0-2, & k=0;\\[0.5em]
			2k_1-\frac{|T_1 \cap \hat{T}^{(2)}|}{2}, & k=1,
		\end{cases}
	\end{equation*}
	and 
	\begin{equation*}
		C_{k,2}=\begin{cases}
		(k_0-1)^2-\frac{\left|(T_0^2 \setminus T_0^{(2)})\cap \hat{T}^{(4)} \right|}{2}, & k=0;\\[0.5em]
		k_1^2-2k_1-\frac{\left|(T_1^2\setminus (T_1^{(2)} \cup \{e\}))\cap \hat{T}^{(4)}\right|}{2}, & k=1.
		\end{cases}
	\end{equation*}
\end{lemma}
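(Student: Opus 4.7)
The plan is to translate the intersection sizes $|a_iT_k\cap a_jT_k|$ into coefficients of $T_k^2$ in $\Z[H]$, pin those coefficients down using \eqref{eq:T0^2+T1^2} and Lemma \ref{le:T_0T_1_nece_condi}, and then evaluate the resulting sums by a closed-form computation of $(\hat T^{(2)})^2$ from \eqref{eq:T0T1} and \eqref{eq:T0^2+T1^2}.

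The first step is the identity $|a_iT_k\cap a_jT_k|=[T_k^2]_{a_i^{-1}a_j}$, which uses only that $T_k=T_k^{(-1)}$; here $[\,\cdot\,]_g$ denotes the coefficient of $g$. Since the $a_i$ are distinct and $i<j$, we have $a_i^{-1}a_j\neq e$, and \eqref{eq:T0^2+T1^2} forces $[T_0^2]_g+[T_1^2]_g\leq 2$ for every $g\neq e$, which proves the first assertion. Refining this with Lemma \ref{le:T_0T_1_nece_condi}(g), I read off the full coefficient picture for $g\neq e$: $[T_0^2]_g=1$ exactly on $T_0^{(2)}\setminus\{e\}$; $[T_1^2]_g=1$ exactly on $T_1^{(2)}$; and every other element of the support of $T_k^2$ has coefficient~$2$. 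Setting $U_k=\{g\neq e:[T_k^2]_g=2\}$, comparing total weights gives $|U_0|=(k_0-1)^2/2$ and $|U_1|=k_1(k_1-2)/2$.

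The key computation is next. Combining \eqref{eq:T0T1} and \eqref{eq:T0^2+T1^2} directly yields $\hat T^2=T_0^2+T_1^2+2T_0T_1=4H-\hat T^{(2)}+(2n-2)e$. Applying the ring endomorphism $x\mapsto x^{(2)}$ on $\Z[H]$ (available because $H$ is abelian) and using $H^{(2)}=H$ (since $|H|=n^2+n+1$ is odd) produces the closed form
$$(\hat T^{(2)})^2=4H-\hat T^{(4)}+(2n-2)e,$$
so $[(\hat T^{(2)})^2]_g=4-[g\in\hat T^{(4)}]$ for every $g\neq e$. Because for such $g$ the number of ordered pairs $(i,j)$ with $a_i^{-1}a_j=g$ equals $|\hat T^{(2)}\cap g\hat T^{(2)}|=[(\hat T^{(2)})^2]_g$, halving to pass to unordered pairs gives
$$2C_{k,1}=\sum_{g\in S_{k,1}}(4-[g\in\hat T^{(4)}]),\qquad 2C_{k,2}=\sum_{g\in U_k}(4-[g\in\hat T^{(4)}]),$$
where $S_{0,1}=T_0^{(2)}\setminus\{e\}$ and $S_{1,1}=T_1^{(2)}$.

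To finish I identify the intersection sets that appear above. Since $|H|$ is odd, $c\mapsto c^2$ is a bijection of $H$ carrying $T_\ell\cap\hat T^{(2)}$ onto $T_\ell^{(2)}\cap\hat T^{(4)}$ for each $\ell\in\{0,1\}$. Lemma \ref{le:T_0T_1_nece_condi}(c) gives $T_0\cap\hat T^{(2)}=\{e\}$, hence $(T_0^{(2)}\setminus\{e\})\cap\hat T^{(4)}=\emptyset$, producing $C_{0,1}=2k_0-2$; the same bijection yields $|T_1^{(2)}\cap\hat T^{(4)}|=|T_1\cap\hat T^{(2)}|$ and hence the stated $C_{1,1}$. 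For the $C_{k,2}$ formulas the set equalities $U_0=(\mathrm{supp}\,T_0^2)\setminus T_0^{(2)}$ and $U_1=(\mathrm{supp}\,T_1^2)\setminus(T_1^{(2)}\cup\{e\})$ translate $U_k\cap\hat T^{(4)}$ into the intersections stated in the Lemma. The main obstacle I expect is bookkeeping rather than any single deep step: tracking ordered versus unordered pairs, verifying that each intersection with $\hat T^{(4)}$ is inverse-closed and fixed-point-free (so the stated divisions by $2$ really are integers), and handling $e$ consistently in all of the sets involved.
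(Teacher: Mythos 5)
Your proof is correct, and while it opens with the same translation as the paper (namely $|a_iT_k\cap a_jT_k|=[T_k^2]_{a_ia_j^{-1}}$, bounded by $2$ via \eqref{eq:T0^2+T1^2} and Lemma \ref{le:T_0T_1_nece_condi}(g)), the way you evaluate the counts is genuinely different. The paper proceeds by a case-by-case combinatorial count: for each of $C_{0,1},C_{1,1},C_{0,2},C_{1,2}$ it splits the pairs $(a_i,a_j)$ according to whether $\sqrt{a_i},\sqrt{a_j}$ lie in $T_0$ or $T_1$, counts representations of each target element using \eqref{eq:T0T1} and \eqref{eq:T0^2+T1^2}, and handles the ordered-versus-unordered issue with an explicit indexing convention ($a_i^{-1}=a_{k_0-i}$, etc.). You instead compute the single closed form $(\hat T^{(2)})^2=(\hat T^2)^{(2)}=4H-\hat T^{(4)}+(2n-2)e$, using that $A\mapsto A^{(2)}$ is a ring endomorphism of $\Z[H]$ for abelian $H$ of odd order, and then read off $2C_{k,\ell}=\sum_{g}(4-[g\in\hat T^{(4)}])$ over the appropriate inverse-closed, $e$-free coefficient level sets of $T_k^2$; all four formulas drop out of this one identity together with the squaring bijection (e.g.\ $T_0\cap\hat T^{(2)}=\{e\}$ giving $C_{0,1}=2k_0-2$, and $|T_1^{(2)}\cap\hat T^{(4)}|=|T_1\cap\hat T^{(2)}|$ giving $C_{1,1}$). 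Your route is shorter, treats all four counts uniformly, and makes it transparent why $\hat T^{(4)}$ appears in the answers; the paper's route is more elementary (no appeal to the endomorphism property) and exhibits explicitly which pairs realize each intersection size, though that extra local information is not needed downstream. The residual ``bookkeeping'' you flag is indeed routine: every set you sum over is inverse-closed, excludes $e$, and lies in a group of odd order, so the halvings are legitimate.
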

\begin{proof}
	An element  $a_ix=a_jy\in a_i T_k\cap a_j T_k$ for some $x,y\in T_k$ if and only if
	\[ a_i a_j^{-1}=x^{-1}y\in T_k^2. \]
	By \eqref{eq:T0^2+T1^2}, there exist at most two possible choices of $(x,y)$. Moreover, if $(x,y)$ is such that $a_ix=a_jy$, then $a_iy^{-1}=a_jx^{-1}$ which also provides a solution $(y^{-1}, x^{-1})$. Thus $|a_i T_k\cap a_j T_k| =1$ if and only if $x=y^{-1}$, which is equivalent to $a_ia_j^{-1}\in T_k^{(2)}$.
	
	Next let us determine the value of $C_{k,\ell}$ for $\ell=1,2$. 
	\medskip
	
	\textbf{Calculation of $C_{0,1}$}.
	By the previous analysis, for $i<j$,  $|a_i T_0\cap a_j T_0| =1$ if and only if $a_ia_j^{-1}\in T_0^{(2)}\setminus \{e\}$, which is equivalent to $\sqrt{a_i}\sqrt{a_j^{-1}}\in T_0\setminus \{e\}$.
	
	If $0\leq i<j\leq k_0$, i.e.\ $\sqrt{a_i}, \sqrt{a_j}\in T_0$,  then $\sqrt{a_i}$ must be $e$ because $e\in T_0=e\cdot T_0$ which is a subset of $T_0^2$, and Lemma \ref{le:T_0T_1_nece_condi} (c) and \eqref{eq:T0^2+T1^2} tell us that $T_0 \cap \left\{ab :   a,b\in T_0\setminus\{e\}, a\neq b \right\} = \emptyset$. If $k_0\leq i<j\leq 2n$, then by Lemma \ref{le:T_0T_1_nece_condi} (g) and $T_0\subseteq T_0^2$,  $\sqrt{a_i}, \sqrt{a_j}$ cannot belong to $T_1$.
	Therefore, we have proved that  for $i<j$,  $|a_i T_0\cap a_j T_0| =1$ if and only if one of the following cases happens:
	\begin{enumerate}[label=(\alph*)]
		\item $a_i=e$ and $\sqrt{a_j}\in T_0\setminus\{e\}$;
		\item $\sqrt{a_i}\in T_0$ and $\sqrt{a_j}\in T_1$ such that $\sqrt{a_i a_j^{-1}}\in T_0$.
	\end{enumerate}
	By \eqref{eq:T0T1}, there are totally $k_0-1$ ordered pairs $(a_i, a_j)$ satisfying (b). Thus, $C_{0,1}=2(k_0-1)$.
	
		\medskip
	
	\textbf{Calculation of $C_{1,1}$}.
	Similarly, for $i<j$,  $|a_i T_1\cap a_j T_1| =1$ if and only if $a_ia_j^{-1}\in T_1^{(2)}$, which is equivalent to one of the following cases:
	\begin{enumerate}[label=(\alph*)]
		\item $a_i=e$ and $\sqrt{a_j}\in T_1$;
		\item $\sqrt{a_i},\sqrt{a_j}\in T_k$ for the same $k=0$ or $1$ such that $\sqrt{a_i a_j^{-1}}\in T_1$.
	\end{enumerate}
	The first case is obtained by checking the coefficients of elements in $T_0T_1$ via \eqref{eq:T0T1} and the second case is from \eqref{eq:T0^2+T1^2}.
	It is clear that Case (a) provides $k_1$ ordered pairs $(a_i, a_j)$ with $i<j$. 
	
	To count this number for Case (b), without loss of generality, we order the elements in $T_0$ and $T_1$ such that $a_i^{-1} = a_{k_0-i}$ for $i=1,\cdots, k_0$ and $a_{k_0+i}^{-1} = a_{2n-i}$ for $i=0,\cdots, k_1-1$. 
	
	Given an element $c\in T_1$, by \eqref{eq:T0^2+T1^2}, there always exists exactly one unordered pair $\{a,b\}\subseteq T_0$ or $T_1$ such that $ab=c$. Suppose that $a=\sqrt{a_u}$, $b=\sqrt{a_v^{-1}}$ and $a,b\in T_0$.  As $c\neq e$, $u$ and $v$ must be different. Consequently,
	\[ c=ab=\sqrt{a_u} \sqrt{a_v^{-1}}=\sqrt{a_{k_0-v}}\sqrt{a_{k_0-u}^{-1}}.  \]
	
	Note that $u<v$ implies $k_0-v<k_0-u$. 
	Thus, if $a_u\neq a_{k_0-v}$, 
	\[
	\#\left\{	(\sqrt{a_i}, \sqrt{a_j}) \in T_0\times T_0: c=\sqrt{a_i a_j^{-1}}, i<j\right\}=
	\begin{cases}
		2, & u<v;\\
		0, & u>v.
	\end{cases}
	\]
	If $a_u = a_{k_0-v}$,  then $a_u=a_v^{-1}$ and $c$ is represented by exactly one ordered pair which implies $c=a_u\in T_0^{(2)}$.
	
	Therefore, there are totally 
	\[
	2\cdot \frac{|T_1\cap (T_0^2 \setminus T_0^{(2)})|}{2}+\frac{|T_1\cap T_0^{(2)}|}{2} 
	= 
	|T_1\cap T_0^2 | - |T_1\cap T_0^{(2)}|+\frac{|T_1\cap T_0^{(2)}|}{2}
	\]
	ordered pair $(\sqrt{a_i}, \sqrt{a_j})\in T_0\times T_0$ with $i<j$ such that $\sqrt{a_i a_j^{-1}}\in T_1$. A similar analysis for $a,b\in T_1$ leads to 
	\[
	2\cdot \frac{|T_1\cap (T_1^2 \setminus T_1^{(2)})|}{2}+\frac{|T_1\cap T_1^{(2)}|}{2} 
	= 
	|T_1\cap T_1^2 | - |T_1\cap T_1^{(2)}|+\frac{|T_1\cap T_1^{(2)}|}{2}
	\]
	ordered pair $(\sqrt{a_i}, \sqrt{a_j})\in T_1\times T_1$ with $i<j$ such that $\sqrt{a_i a_j^{-1}}\in T_1$. As \eqref{eq:T0^2+T1^2} tells us that $T_1$ is covered by the elements in $T_0^2$ and $T_1^2$, by the above counting results, 
	\[C_{1,1}= k_1+\left(|T_1|-\frac{|T_1 \cap T_0^{(2)}|}{2}-\frac{|T_1 \cap T_1^{(2)}|}{2}\right)
	=2k_1-\frac{|T_1 \cap \hat{T}^{(2)}|}{2}.\]

	\textbf{Calculation of $C_{0,2}$}.
	For $i<j$,  $|a_i T_0\cap a_j T_0| =2$ if and only if $a_ia_j^{-1}\in T_0^2\setminus T_0^{(2)}$, which is equivalent to one of the following cases:
	\begin{enumerate}[label=(\alph*)]
		\item $\sqrt{a_i}\in T_0$ and $\sqrt{a_j}\in T_1$ such that $a_ia_j^{-1}\in T_0^2\setminus T_0^{(2)}$;
		\item $\sqrt{a_i},\sqrt{a_j}\in T_k$ for the same $k=0$ or $1$ such that  $a_i a_j^{-1}\in  T_0^2\setminus T_0^{(2)}$.
	\end{enumerate}
	By \eqref{eq:T0T1}, $ T_0^{(2)}T_1^{(2)}=H-e$. Thus the cardinality of the ordered pairs $(a_i, a_j)$ satisfying (a) and $i<j$  is
	\begin{align*}
		|T_0^2\setminus T_0^{(2)}|&=\frac{1}{2}\left(\#\{(a,b): a,b\in T_0\} - \#\{(a,a^{-1}): a\in T_0\} - \#\{(a,a): a\in T_0, a\neq e\} \right)\\
		&=\frac{1}{2}(k_0^2-k_0-(k_0-1)) =\frac{(k_0-1)^2}{2}.
	\end{align*}
	
	For (b), we follow the argument in the calculation of $C_{1,1}$.
	For any $c \in T_0^2\setminus T_0^{(2)}$, suppose that $a=\sqrt{a_u}$, $b=\sqrt{a_v^{-1}}$ and $a,b\in T_0$ such that $c=a_ua_v^{-1}=(ab)^2\in (T_0^{(2)})^2$. As $c\neq e$, $u$ and $v$ must be different. Consequently,
	\[ c={a_u} {a_v^{-1}}={a_{k_0-v}}{a_{k_0-u}^{-1}}.  \]
	
	It follows that
		\[
	\#\left\{	(\sqrt{a_i}, \sqrt{a_j}) \in T_0\times T_0: c=a_i a_j^{-1}, i<j\right\}=
	\begin{cases}
	2, & a_u \neq a_{k_0-v}, u<v;\\
	0, & a_u \neq a_{k_0-v}, u>v;\\
	1, & a_u = a_{k_0-v}.
	\end{cases}
	\]
	Therefore, there are totally 
	\begin{align*}
		&2\cdot \frac{|(T_0^2 \setminus T_0^{(2)})\cap ((T_0^{(2)})^2\setminus T_0^{(4)})|}{2}+\frac{|(T_0^2 \setminus T_0^{(2)})\cap T_0^{(4)}|}{2}\\
		=& 		2\cdot \frac{|(T_0^2 \setminus T_0^{(2)})\cap (T_0^{(2)})^2|}{2}-\frac{|(T_0^2 \setminus T_0^{(2)})\cap T_0^{(4)}|}{2} 
	\end{align*}
	ordered pair $(\sqrt{a_i}, \sqrt{a_j})\in T_0\times T_0$ with $i<j$ such that $a_i a_j^{-1}\in T_0^{2}\setminus T_0^{(2)}$. 
	
	A similar analysis for $a,b\in T_1$ leads to 
	\[
	2\cdot \frac{|(T_0^2 \setminus T_0^{(2)})\cap (T_1^{(2)})^2|}{2}-\frac{|(T_0^2 \setminus T_0^{(2)})\cap T_1^{(4)}|}{2}
	\]
	ordered pair $(\sqrt{a_i}, \sqrt{a_j})\in T_1\times T_1$ with $i<j$ such that $a_i a_j^{-1}\in T_0^{2}\setminus T_0^{(2)}$. As \eqref{eq:T0^2+T1^2} tells us that 
	\[
	(T_0^{(2)})^2+(T_1^{(2)})^2 = 2H-T_0^{(4)}-T_1^{(4)}+2ne,
	\]
	which means that
	$T_0^2\setminus T_0^{(2)}$ is covered by the elements in $(T_0^{(2)})^2$ and $(T_1^{(2)})^2$. By the counting results for cases (a) and (b), 
	\begin{align*}
		C_{0,2} &= |T_0^2\setminus T_0^{(2)}|+\left(|T_0^2\setminus T_0^{(2)}|- \frac{|(T_0^2 \setminus T_0^{(2)})\cap \hat{T}^{(4)}|}{2}\right)\\
		&=(k_0-1)^2- \frac{|(T_0^2 \setminus T_0^{(2)})\cap \hat{T}^{(4)}|}{2}.
	\end{align*}
	
%
%
	\medskip
	\textbf{Calculation of $C_{1,2}$}.
	For $i<j$,  $|a_i T_1\cap a_j T_1| =2$ if and only if $a_ia_j^{-1}\in T_1^2\setminus  (T_1^{(2)} \cup \{e\})$, which is equivalent to one of the following cases:
	\begin{enumerate}[label=(\alph*)]
		\item $\sqrt{a_i}\in T_0$ and $\sqrt{a_j}\in T_1$ such that $a_ia_j^{-1}\in T_1^2\setminus  (T_1^{(2)} \cup \{e\})$;
		\item $\sqrt{a_i},\sqrt{a_j}\in T_k$ for the same $k=0$ or $1$ such that  $a_i a_j^{-1}\in  T_1^2\setminus  (T_1^{(2)} \cup \{e\})$.
	\end{enumerate}
	For Case (a), \eqref{eq:T0T1} tells us that each element in $T_1^2\setminus (T_1^{(2)} \cup \{e\})$ is covered by elements in $T_0T_1$ exactly once. As a consequence, the cardinality of ordered pairs $(a_i, a_j)$ satisfying (a) and $i<j$  is
		\begin{align*}
		|T_1^2\setminus (T_1^{(2)} \cup \{e\})|&=\frac{1}{2}\left(\#\{(a,b): a,b\in T_1\} - \#\{(a,a^{-1}): a\in T_1\} - \#\{(a,a): a\in T_1\} \right)\\
		&=\frac{1}{2}(k_1^2-2k_1) .
		\end{align*}
		For Case (b), we follow the same argument for Case (b) of $C_{0,2}$. 
	Consequently, 
	\begin{align*}
		C_{1,2}&=2|T_1^2\setminus (T_1^{(2)} \cup \{e\})|- \frac{|(T_1^2\setminus (T_1^{(2)} \cup \{e\}))\cap \hat{T}^{(4)}|}{2}\\
		&=k_1^2-2k_1-\frac{|(T_1^2\setminus (T_1^{(2)} \cup \{e\}))\cap \hat{T}^{(4)}|}{2}. \qedhere
	\end{align*} 
\end{proof}

By Lemma \ref{le:C_k,l}, it is straightforward to get
\begin{equation}\label{eq:aiTk_ajTk}
	\sum_{i<j} |a_i T_k\cap a_j T_k|=
	\begin{cases}
		2k_0(k_0-1)-\left|(T_0^2 \setminus T_0^{(2)})\cap \hat{T}^{(4)}\right|, & k=0;\\[0.5em]
		2k_1(k_1-1)-\frac{|T_1 \cap \hat{T}^{(2)}|}{2}-\left|(T_1^2\setminus (T_1^{(2)} \cup \{e\}))\cap \hat{T}^{(4)}\right|, &k=1.
	\end{cases}
\end{equation}
Recall that $\theta_0=|(T_0^2 \setminus T_0^{(2)})\cap \hat{T}^{(4)}|$. Let $\theta_{1,1}=|(T_1^2\setminus (T_1^{(2)} \cup \{e\}))\cap \hat{T}^{(4)}|$ and $\theta_{1,2}=|T_1 \cap \hat{T}^{(2)}|$ which equals $|T_1^{(2)} \cap \hat{T}^{(4)}|$. By \eqref{eq:T0^2+T1^2}
and $\hat{T}^{(4)} \cap T_0^{(2)} = \{e\}$, 
\begin{equation}\label{eq:sum_theta_i}
	\theta_0 +\theta_{1,1}+\theta_{1,2}=|\hat{T}^{(4)}\setminus\{e\}|=2n.
\end{equation}
Hence
\[
\theta_0+\theta_1 =\theta_0+ \theta_{1,1}+\frac{\theta_{1,2}}{2}\leq 2n.
\]

Now we are ready to prove Lemma \ref{le:in-ex_X_Y}.
\begin{proof}[Proof of Lemma \ref{le:in-ex_X_Y}.]
	We only prove the result for $T_0$. The proof of \eqref{eq:sum_|Y_i|-0} is the same.
	
	By the inclusion--exclusion principle, we count the distinct elements in $\hat{T}^{(2)}T_0$,
	\begin{equation}\label{eq:expand_T2T_0}
	|\hat{T}^{(2)}T_0| = \sum_{i=0}^{2n} |a_iT_0| - \sum_{i<j} |a_iT_0 \cap a_jT_0| + \sum_{r\geq 3} (-1)^{r-1} |a_{i_1}T_0\cap a_{i_2}T_0 \cap \cdots \cap a_{i_r}T_0|,
	\end{equation}
	where $i_1<i_2<\cdots <i_r$ cover all the possible values. By the definition of $X_i$'s, the left-hand side of \eqref{eq:expand_T2T_0} equals $\sum_{i=1}^{M_0}|X_i|$.
	
	Now we determine the value of the different sums in \eqref{eq:expand_T2T_0}. First, $\sum_{i=0}^{2n} |a_iT_0|=(2n+1)k_0$. Second,  \eqref{eq:aiTk_ajTk} tells us the value of $\sum_{i<j} |a_i T_0\cap a_j T_0|$. 
	
	Finally, suppose that $g\in a_{i_1}T_0\cap a_{i_2}T_0 \cap \cdots \cap a_{i_r}T_0$ with $r\geq 3$. It means that $g\in X_s$ for some $s\geq 3$. Then the contribution for $g$ in the sum $\sum_{r\geq 3} (-1)^{r-1} |a_{i_1}T_0\cap a_{i_2}T_0 \cap \cdots \cap a_{i_r}T_0|$ is
	\[ (-1)^{3-1} \binom{s}{3}+(-1)^{4-1} \binom{s}{4} +\cdots =\binom{s}{2} -\binom{s}{1} + \binom{s}{0} = \frac{(s-1)(s-2)}{2}.\]
	Therefore,
	\[\sum_{r\geq 3} (-1)^{r-1} |a_{i_1}T_0\cap a_{i_2}T_0 \cap \cdots \cap a_{i_r}T_0| = \sum_{s\geq 3} |X_s|\frac{(s-1)(s-2)}{2}.\]
	
	Plugging them all into \eqref{eq:expand_T2T_0}, we get \eqref{eq:sum_|X_i|-0}.
\end{proof}

Recall that at the very beginning of this section we have obtained
\begin{align}
	\label{eq:T_0^3}	\hat{T}^{(2)}T_0 &= (2k_0-k_1)H+T_1-T_0^3+2nT_0,\\
	\label{eq:T_1^3}	\hat{T}^{(2)}T_1 &= (2k_1-k_0)H+T_0-T_1^3+2nT_1.
\end{align}

In the following,  we investigate \eqref{eq:T_0^3} and \eqref{eq:T_1^3} separately in different cases depending on the value of $n$ modulo $3$.

\subsection{Case: $n\equiv 1\pmod{3}$}
First we need to prove the following  observation.

\begin{lemma}\label{le:n=1mod3_rep}
	When $n\equiv 1 \pmod{3}$, 
	\begin{enumerate}[label=(\alph*)]
		\item there is no element in $T_j^{(3)}$ appearing more than $2$ times for $j=0,1$,
		\item $e$ appears only once in $T_0^{(3)}$, and
		\item there are $0$ or $2$ elements appearing twice in $T_0^{(3)}$, and there are at most $2$ elements appearing twice in $T_1^{(3)}$.
	\end{enumerate}
\end{lemma}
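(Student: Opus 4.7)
The plan is to exploit the group-ring identity \eqref{eq:T0^2+T1^2} coefficient-wise: for every $g\in H\setminus\{e\}$ one has $T_0^2(g)+T_1^2(g)+T_0^{(2)}(g)+T_1^{(2)}(g)=2$. The hypothesis $n\equiv 1\pmod 3$ gives $3\mid |H|$, so $H$ has a unique subgroup $H_3=\{e,\omega,\omega^{-1}\}$ of order~$3$. The key observation is that the multiplicity of $a^3$ in $T_j^{(3)}$ equals $|T_j\cap aH_3|$, so parts (a), (b), (c) are all statements about how $T_j$ distributes among the $H_3$-cosets.

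Part (a) follows immediately from the identity: if a whole coset $\{x,x\omega,x\omega^{-1}\}\subseteq T_j$, then the three ordered pairs $(x,x)$, $(x\omega,x\omega^{-1})$, $(x\omega^{-1},x\omega)$ all have product $x^2$, giving $T_j^2(x^2)\geq 3$. Since $|H|$ is odd, $x^2=e$ forces $x=e$; the latter would put $\omega,\omega^{-1}\in T_j$, violating Lemma~\ref{le:T_0T_1_nece_condi}(c) (for $j=0$) or $e\notin T_1$ (for $j=1$). Hence $x^2\neq e$ and the identity $T_j^2(x^2)\leq 2$ contradicts $T_j^2(x^2)\geq 3$. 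Part (b) is a similar special case: if $a\in T_0\setminus\{e\}$ satisfied $a^3=e$ then $a\in\{\omega,\omega^{-1}\}$, and inverse-closedness would force $\omega\in T_0\cap(T_0^{(2)}\setminus\{e\})$, contradicting Lemma~\ref{le:T_0T_1_nece_condi}(c).

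The heart of the argument is (c). Call an $H_3$-coset \emph{doubly covered} (for $T_j$) if it meets $T_j$ in exactly two points; by (a) no coset is triply covered, and the number of elements appearing twice in $T_j^{(3)}$ equals the number of doubly covered cosets. Inverse-closedness of $T_j$ shows that inversion permutes these cosets via $xH_3\mapsto x^{-1}H_3$. Because $|H/H_3|=3k^2+3k+1$ (with $n=3k+1$) is odd, the only self-inverse coset is $H_3$ itself; for $T_0$ this coset meets $T_0$ only in $\{e\}$, so all doubly covered $T_0$-cosets fall into genuine inverse pairs, while for $T_1$ one must additionally allow the self-inverse possibility $T_1\cap H_3=\{\omega,\omega^{-1}\}$. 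The decisive computation is $T_j^2(\omega)$. A case check on $i\in\{1,2\}$ shows that each inverse pair of doubly covered cosets $\{x,x\omega^i,x^{-1},x^{-1}\omega^{-i}\}\subseteq T_j$ contributes exactly~$2$ ordered pairs with product $\omega$, while the self-inverse case $\{\omega,\omega^{-1}\}\subseteq T_1$ contributes exactly~$1$ (from $(\omega^{-1})^2=\omega$); singly covered cosets and cross terms between different inverse pairs contribute $0$, since $\omega\in H_3$ forces the two factors to live in mutually inverse cosets. Writing $T_0^2(\omega)=2m$ and $T_1^2(\omega)=2m'+\delta$ with $\delta\in\{0,1\}$, the identity at $g=\omega$ forces $2m\leq 2$ and $2m'+\delta\leq 2$, which give $m\in\{0,1\}$ and $2m'+\delta\leq 2$ respectively, delivering the numerical claims in (c).

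The main technical obstacle is the bookkeeping in the case check: one has to enumerate ordered products carefully to show that each inverse pair contributes \emph{precisely} $2$ (and not $1$ or more) to $T_j^2(\omega)$, by verifying for both $i=1$ and $i=2$ that every would-be extra factor falls outside $T_j\cap xH_3=\{x,x\omega^i\}$ (and hence outside $T_j$), and by checking that no unnoticed contribution comes from cosets outside the chosen inverse pair.
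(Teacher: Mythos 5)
Your proof is correct and takes essentially the same route as the paper's: both arguments rest on the fact that, by \eqref{eq:T0^2+T1^2}, the coefficient of the order-$3$ element $\omega=h$ in $T_j^2$ is at most $2$, which bounds the number of pairs of elements of $T_j$ lying in a common coset of the order-$3$ subgroup. Your coset-of-$H_3$ bookkeeping and exact evaluation of $T_j^2(\omega)$ is a repackaging of the paper's observation that there is at most one unordered pair $\{u,v\}\subseteq T_j$ with $uv=h$, so no further comparison is needed.
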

\begin{proof}
	Recall that $|H|=n^2+n+1$. By the assumption, $3\mid |H|$ but $9\nmid |H|$. Thus there are exactly two elements in $H$ of order $3$. We denote one of them by $h$, which means the other one is $h^{-1}$.
	
	(a). For fixed $j=0$ or $1$, suppose that there exist distinct elements $a,b$ in $T_j$ such that $a^3=b^3$. Then $ab^{-1}$ must be of order $3$ in $H$. 
	Without loss of generality, assume $ab^{-1}=h$. 
	
	 Toward a contradiction, suppose that there exists $c\neq a,b$ in $T_j$ such that $c^3=a^3=b^3$ which means $ac^{-1}$ or $bc^{-1}$ equals $h$. By \eqref{eq:T0^2+T1^2}, there are at most one unordered pair $\{u,v\}$ with $u,v\in T_j$ such that $uv=h$. It follows that the unordered pairs $\{a,b^{-1}\}$ and $\{a,c^{-1} \}$ (or $\{b,c^{-1}\}$) must be equal. However, this is impossible. 
	
	(b). Note that $e=a^3$ for some $a\in T_0$ if and only if $a=e, h$ or $h^{-1}$. Hence, if $e$ appears more than once in $T_0^{(3)}$, then it appears for at least three times in $T_0^{(3)}$ which contradicts (a).
	
	(c). For $j=1$ or $2$, if $g=t_1^3=t_2^3$ appears twice in $T_j^{(3)}$, then $t_1t_2^{-1}$ has to be of order $3$. By \eqref{eq:T0^2+T1^2}, the unordered pair $\{t_1,t_2^{-1}\}$ must be $\{a,b^{-1} \}$ or $\{b,a^{-1} \}$ which are given in (a). Therefore, there is no element different from $a^3$ and $a^{-3}$ appearing twice in $T_j^{(3)}$. 
	
	In particular, for $j=0$, (b) tells us that $a^3\neq e$. Hence $a^3\neq a^{-3}\in T_0^{(3)}$. 
\end{proof}

	By Lemma \ref{le:T_0T_1_nece_condi} (e) and (f), $ 2k_0-k_1\equiv 2k_1-k_0\equiv 0\pmod{3} $. Using \eqref{eq:T_0^3}, \eqref{eq:T_1^3} and $3\mid (n-1)$, 
\begin{align}
\label{eq:1:T_0^3_mod3}	\hat{T}^{(2)}T_0 &\equiv T_1-T_0^{(3)} + 2T_0\pmod{3},\\
\label{eq:1:T_1^3_mod3}	\hat{T}^{(2)}T_1 &\equiv T_0-T_1^{(3)} + 2T_1\pmod{3}.
\end{align}
By Lemma \ref{le:T_0T_1_nece_condi} (b) and (h), 
\[T_0\cap T_1=\emptyset, \text{ and }T_0\cap T_0^{(3)} = \{e\}. \]
Let $\Delta_i$ be the set of elements appearing twice in $T_i^{(3)}$ for $i=1,2$. By Lemma \ref{le:n=1mod3_rep}, $|\Delta_0|, |\Delta_1|\leq 2$ and there is no element appearing more than 2 times in $T_i^{(3)}$ for $i=1,2$.  Note that 
\[
|T_0^{(3)}|=k_0-|\Delta_0|, \text{ and } |T_1^{(3)}|=k_1-|\Delta_1|.
\]
Depending on the parity of $n$, we investigate \eqref{eq:1:T_1^3_mod3} and \eqref{eq:1:T_0^3_mod3}, respectively.
\begin{theorem}\label{th:n=1_even}
	For $n\equiv 4 \pmod{6}$ and $n>2$, there is no inverse-closed subsets $T_0$ and $T_1\subseteq H$ with $e\in T_0$ and $k_0+k_1=2n+1$ satisfying \eqref{eq:T0T1} and \eqref{eq:T0^2+T1^2}.
\end{theorem}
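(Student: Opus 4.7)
My plan is to carry out the strategy sketched just before the theorem: reduce $\hat T^{(2)}T_0$ and $\hat T^{(2)}T_1$ modulo $3$, combine the resulting congruences with the counting identities \eqref{eq:sum_i|X_i|}--\eqref{eq:sum_|Y_i|-0}, and push the arithmetic of $(|X_i|)$ and $(|Y_i|)$ to a contradiction.

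Because $n\equiv 4\pmod{6}$, we have $k_0=n+1$, $k_1=n$, $3\mid|H|$ and $9\nmid|H|$, so Lemma \ref{le:n=1mod3_rep} applies. The first step is to exploit \eqref{eq:1:T_0^3_mod3} by classifying each $g\in H$ according to the triple $([g\in T_0],[g\in T_1],m_0(g))$ with $m_0(g)\in\{0,1,2\}$: Lemmas \ref{le:T_0T_1_nece_condi}(b),(c),(h) and \ref{le:n=1mod3_rep} rule out most of the nine a priori combinations, and the surviving ones partition $H$ into explicit subsets whose cardinalities present $N_j:=\sum_{i\equiv j\pmod{3}}|X_i|$ (for $j=0,1,2$) as affine functions of $|\Delta_0|$, $\alpha:=|T_1\cap T_0^{(3)}|$ and $\beta:=|T_1\cap\Delta_0|$. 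The support of the right-hand side of \eqref{eq:1:T_0^3_mod3} is of size $O(n)$ while $|H|$ is quadratic in $n$, so $N_1$ and $N_2$ are both $O(n)$ and the bulk of $H$ ends up in $\bigcup_{i\geq 0}X_{3i}$.

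The key congruence is obtained by reducing $\sum_i\binom{i}{2}|X_i|=2n(n+1)-\theta_0$ modulo $3$; since $\binom{i}{2}\equiv[i\equiv 2\pmod{3}]\pmod{3}$, this becomes $N_2\equiv 2n(n+1)-\theta_0\pmod{3}$, giving a linear congruence among $\alpha$, $\beta$, $|\Delta_0|$ and $\theta_0$. The parities $\alpha,\theta_0\in 2\mathbb Z$ from inverse-closedness and Lemma \ref{le:C_k,l}, together with $|\Delta_0|\in\{0,2\}$ from Lemma \ref{le:n=1mod3_rep}, cut the list of feasible quadruples dramatically. A parallel analysis of $\hat T^{(2)}T_1$ via \eqref{eq:1:T_1^3_mod3} provides an analogous congruence involving $|\Delta_1|$, $|T_0\cap T_1^{(3)}|$ and $\theta_1$; combined with the global constraint $\theta_0+\theta_{1,1}+\theta_{1,2}=2n$ from \eqref{eq:sum_theta_i}, this should pin down each nonzero $|X_i|$ and $|Y_i|$ as an explicit affine function of a small number of integer parameters. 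Feeding these back into \eqref{eq:sum_|X_i|-0} and \eqref{eq:sum_|Y_i|-0}, I expect the forced value of some specific $|X_i|$ (probably $|X_3|$) or $|Y_i|$ to either go negative or to overshoot the partition-class bound $N_0-|X_0|$, producing the contradiction.

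The main obstacle I foresee is the $T_1$-side: the three values $|\Delta_1|\in\{0,1,2\}$ permitted by Lemma \ref{le:n=1mod3_rep} correspond to structurally different configurations of $T_1\cap\{h,h^{-1}\}$ (where $h$ generates the unique order-$3$ subgroup of $H$), and they feed different residue tables into the mod-$3$ classification of $\hat T^{(2)}T_1$, so the bookkeeping has to be redone in each subcase. A secondary annoyance is the small boundary $n=4$, where $|H|=21$ is small enough that the linear system might in principle admit a parasitic solution; this can be dispatched by direct enumeration in the group $G$ of order $42$.
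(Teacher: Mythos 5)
Your toolkit is the right one (reduction of \eqref{eq:T_0^3} and \eqref{eq:T_1^3} modulo $3$, the inclusion--exclusion identities, parity of the parameters), and the second-moment identity $\sum_i\binom{i}{2}|X_i|=2k_0(k_0-1)-\theta_0$ you extract from Lemma \ref{le:C_k,l} is correct. But two central steps of your plan do not go through as described. First, a mod-$3$ congruence on $N_2=\sum_{i\equiv 2}|X_i|$ cannot ``pin down each nonzero $|X_i|$ and $|Y_i|$'': it does not separate $|X_2|$ from $|X_5|$, $|X_8|$, etc., and it gives no mechanism for forcing the high-multiplicity classes to vanish. What actually works is the \emph{exact} integer identity \eqref{eq:sum_|Y_i|-0} on the $T_1$-side: for $n$ even one has $(2n+1)k_1-2(k_1-1)k_1=3n$, which exactly cancels against $\sum_{i\ge0}|Y_{3i+1}|+\sum_{i\ge0}|Y_{3i+2}|$ (computed from \eqref{eq:1:T_1^3_mod3} after splitting $\Delta_1$ into $\Delta_1\cap T_0$, $\Delta_1\cap T_1$ and the rest), leaving $1=2u_0+u_1+|\Delta_1^2|+2|\Delta_1^1|+\theta_1+\sum_{s\ge3}c_s|Y_s|$ with all $c_s\ge0$ and $c_s>0$ for $s\ge 4$. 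Only this collapse to the constant $1$ kills the terms $|Y_s|$, $s\ge4$, and forces $u_0=u_1=|\Delta_1^1|=|\Delta_1^2|=0$, $\theta_1=1$. No analogous collapse happens on the $X_i$-side when $n$ is even (there the constant is $k_0=n+1$ against roughly $3n$ elements of nonzero residue, which is consistent with many configurations), so your primary focus on $\hat T^{(2)}T_0$ and on $|X_3|$ is aimed at the side that yields nothing.

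Second, your anticipated contradiction (some class size going negative or overshooting) does not materialize. Once the identity above is exploited one gets $|Y_1|=n+1$, $|Y_2|=2n$, $3|Y_3|=2n^2-4n-1$ and $|Y_0|=(n-1)^2/3$, which is a perfectly consistent nonnegative integer solution of \eqref{eq:sum_i|Y_i|}, \eqref{eq:sum_|X_i|} and \eqref{eq:sum_|Y_i|-0} for every $n\equiv4\pmod 6$. The actual contradiction is a parity argument you do not mention: each $Y_i$ is inverse-closed and the $Y_i$ partition the odd-order group $H$, so at most one $|Y_i|$ (the class containing $e$) can be odd; yet $|Y_1|=n+1$ and $|Y_3|$ are both odd since $3|Y_3|=2n^2-4n-1\equiv3\pmod 6$. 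Without this observation the argument terminates in a consistent linear system rather than a contradiction.
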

\begin{proof}
	Suppose to the contrary that there exist $T_0$ and $T_1$ satisfying \eqref{eq:T0T1} and \eqref{eq:T0^2+T1^2}.
	
	As $2\mid n$,  $k_0=n+1$ and $k_1=n$. We concentrate on \eqref{eq:1:T_1^3_mod3}. Let $u_0=|T_1^{(3)} \cap T_0|$ and $u_1=|T_1^{(3)} \cap T_1|$. By Lemma \ref{le:T_0T_1_nece_condi}, $T_0\cap T_1=\emptyset$.  Furthermore, we separate $\Delta_1$ into three disjoint parts $\Delta_1^0 =\Delta_1\cap T_0$, $\Delta_1^1 =\Delta_1\cap T_1$ and $\Delta_1^2 =\Delta_1\setminus (T_0\cup T_1)$.
	
	By comparing the coefficients of elements in \eqref{eq:1:T_1^3_mod3}, we get
	\[
		\bigcup_{i\geq 0}Y_{3i+2} = (T_1\setminus T_1^{(3)})~\dot{\cup}~ \left(T_1^{(3)}\setminus (T_0~\dot{\cup}~T_1 \cup\Delta_1)\right)~\dot{\cup}~ \Delta_{1}^0.
	\]
	Note that $|T_1^{(3)}|=k_1-|\Delta_1|$.
	It follows that
	\[
	\sum_{i\geq 0}|Y_{3i+2}|=(n-u_1)+ (n-|\Delta_1|-(u_0+u_1)-|\Delta_1^2|)+|\Delta_1^0|.
	\]
	Similarly, 
	\[
	\bigcup_{i\geq 0}Y_{3i+1} = (T_0\setminus T_1^{(3)})~\dot{\cup}~\Delta_1^2~\dot{\cup}~(T_1\cap T_1^{(3)}\setminus \Delta_1^1).
	\]
	Thus
	\[
	\sum_{i\geq 0}|Y_{3i+1}|=(n+1-u_0)+|\Delta_1^2|+u_1-|\Delta_1^1|.
	\]
	To summarize, we have proved
	\begin{equation}\label{eq:1:Y_i_n=0}
	\sum_{i\geq 0}|Y_{3i+2}|=2n-2u_1-u_0-2|\Delta_1^2|-|\Delta_1^1|,~ \sum_{i\geq 0}|Y_{3i+1}| =n+1-u_0 + u_1+|\Delta_1^2|-|\Delta_1^1|, 
	\end{equation}
	Now \eqref{eq:sum_|Y_i|-0} becomes
	\begin{align}
\nonumber	\sum_{i=1}^{M_1}|Y_i|&=(2n+1)n-2(n-1)n +\theta_1 + \sum_{s\geq 3} \frac{(s-1)(s-2)}{2}|Y_s|\\
	&=3n +\theta_1+ \sum_{s\geq 3} \frac{(s-1)(s-2)}{2}|Y_s|. \label{eq:1:Y_i_n=0_extra}
	\end{align}
	Plugging  the two equations in \eqref{eq:1:Y_i_n=0} into it to replace $3n$, we get
	\begin{align*}
		\sum_{i=1}^{M_1}|Y_i|=&\sum_{i\geq 0}|Y_{3i+1}| + \sum_{i\geq 0}|Y_{3i+2}| + 2u_0+u_1+   |\Delta_1^2|+2|\Delta_1^1|-1\\
		&+\theta_1+ \sum_{s\geq 3} \frac{(s-1)(s-2)}{2}|Y_s|,
	\end{align*}
	which implies
	\begin{align}
	\nonumber	1=&2u_0+u_1+   |\Delta_1^2|+2|\Delta_1^1| +\theta_1 \\
	\label{eq:n=1_extra}		& +\sum_{s>3, 3\mid s} \left(\frac{(s-1)(s-2)}{2}-1\right)|Y_s| + \sum_{s\geq 3, 3\nmid s} \left(\frac{(s-1)(s-2)}{2}\right)|Y_s|. 
	\end{align}
	As $|Y_i|$, $u_0$, $u_1$,$|\Delta_1^2|$, $|\Delta_1^1|$ and $\theta $ are all nonnegative integers, \eqref{eq:n=1_extra} implies that $u_0=0$ and each $|Y_i|=0$ for $i\geq 4$. Recall that $u_1$ is the cardinality of the inverse-closed subset $T_1^{(3)}\cap T_1$ which does not contain the identity element $e$ of $H$. Hence $u_1$ must be even. Similarly, $|\Delta_1^1|$ and $|\Delta_1^2|$ are also even. Thus, by \eqref{eq:n=1_extra}, 
	\[u_1=|\Delta_1^1|=|\Delta_1^2|=0, \]
	and
	$\theta_1=1$. Plugging them into \eqref{eq:1:Y_i_n=0}, we get
	\[
	|Y_{1}|=n+1, ~ |Y_{2}|=2n.
	\]
	
	By \eqref{eq:sum_i|Y_i|}, 
	\[
	3|Y_3| = (2n+1)n-(n+1)-4n=2n^2-4n-1  \equiv 3 \pmod{6}.
	\]
	Consequently, $|Y_1|$ and $|Y_3|$ are both odd.
	As $Y_i$'s form a partition of the group $H$ and each $Y_i$ is inverse-closed, there exists only one of them of odd size. We get a contradiction.

	Therefore, we have excluded the existence of $|Y_i|$'s which means there do not exist inverse-closed subsets $T_0$ and $T_1\subseteq H$ with $e\in T_0$ and $k_0+k_1=2n+1$ satisfying \eqref{eq:T0T1} and \eqref{eq:T0^2+T1^2}.
\end{proof}

For $n$ odd and $n\equiv 1 \pmod{3}$, we  could not derive any contradiction by analyzing the value of $|X_i|$'s and $|Y_i|$'s. 

\subsection{Case: $n\equiv 0 \pmod{3}$}
We separate the proof into two cases according to the parity of $n$.
\begin{theorem}\label{th:n=0_odd}
	For $n\equiv 3 \pmod{6}$, there is no inverse-closed subsets $T_0$ and $T_1\subseteq H$ with $e\in T_0$ and $k_0+k_1=2n+1$ satisfying \eqref{eq:T0T1} and \eqref{eq:T0^2+T1^2}.
\end{theorem}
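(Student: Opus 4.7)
The plan is to mimic the strategy of Theorem \ref{th:n=1_even}, adapted to the regime $n$ odd, $n\equiv 0\pmod 3$. By Lemma \ref{le:T_0T_1_nece_condi}(e), $k_0=n$ and $k_1=n+1$. Since $|H|=n^2+n+1\equiv 1\pmod 3$, the cubing map is an automorphism of $H$, so $T_0^{(3)}$ and $T_1^{(3)}$ are ordinary subsets and no analogue of the sets $\Delta_j$ from the proof of Theorem \ref{th:n=1_even} arises.

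Reducing \eqref{eq:T_0^3} and \eqref{eq:T_1^3} modulo $3$ (using $2k_0-k_1=n-1\equiv -1$, $2k_1-k_0=n+2\equiv -1$ and $2n\equiv 0\pmod 3$) yields
\[
\hat{T}^{(2)}T_0\equiv -H+T_1-T_0^{(3)}\pmod 3,\qquad \hat{T}^{(2)}T_1\equiv -H+T_0-T_1^{(3)}\pmod 3.
\]
Setting $v=|T_1\cap T_0^{(3)}|$ and $w=|T_0\cap T_1^{(3)}|$ and using Lemma \ref{le:T_0T_1_nece_condi}(b),(h) to classify each $g\in H$ by its coefficient mod~$3$, I would obtain
\[
\sum_{i\ge 0}|X_{3i}|=n+1-v,\quad \sum_{i\ge 0}|X_{3i+1}|=n-v,\quad \sum_{i\ge 0}|X_{3i+2}|=n^2-n+2v,
\]
together with symmetric formulae for the $|Y_{3i+r}|$'s in which $w$ replaces $v$.

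A crucial extra input comes from \eqref{eq:T0T1}: since $T_0T_1=H-e$,
\[
\sum_{g\in T_1}[\hat{T}^{(2)}T_0]_g = [\hat{T}^{(2)}(H-e)]_e = (2n+1)-1 = 2n,
\]
and comparing with the mod-$3$ classification above forces $2v\equiv 2n\equiv 0\pmod 3$, hence $v\equiv 0\pmod 3$. Since $T_1\cap T_0^{(3)}$ is inverse-closed and avoids $e$, $v$ is also even, so $v\equiv 0\pmod 6$; the symmetric computation gives $w\equiv 0\pmod 6$. Applying Lemma \ref{le:in-ex_X_Y} to $T_0$ and $T_1$ then yields
\[
|X_0|+\sum_{s\ge 3}\binom{s-1}{2}|X_s|=(n-1)^2-\theta_0,\qquad |Y_0|+\sum_{s\ge 3}\binom{s-1}{2}|Y_s|=n^2-\theta_1.
\]

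Combining these identities with the bounds $|X_0|\le n+1-v$ and $|Y_0|\le n-w$, the global sums $\sum_i i|X_i|=n(2n+1)$ and $\sum_i i|Y_i|=(n+1)(2n+1)$, the parity constraints (Lemma \ref{le:e_in_X_1} gives that $|X_1|$ is odd and every other $|X_i|$ is even, while exactly one $|Y_j|$ is odd with $j=|T_1\cap\hat{T}^{(2)}|\equiv 0\pmod 6$), and $\theta_0+\theta_1\le 2n$, I would pin $|X_s|$ and $|Y_s|$ for $s\ge 3$ down to a narrow range and then read off $|X_1|,|X_2|,|Y_1|,|Y_2|$ explicitly, in analogy with the proof of Theorem \ref{th:n=1_even}. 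The hard part, I expect, is the closing step: after subtracting $\sum_{i\ge 1}|X_{3i}|$ from the displayed identity, every remaining coefficient of $|X_s|$ becomes divisible by $3$, so the residual information is mod-$3$ coarse and must be combined delicately with the parity data to force some $|X_3|$ or $|Y_3|$ to be simultaneously odd and even. Making this balance yield a contradiction uniformly in $n\equiv 3\pmod 6$ is the technical heart of the argument.
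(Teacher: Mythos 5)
Your setup is sound and agrees with the paper's: the reduction of \eqref{eq:T_0^3} modulo $3$ to $2H+T_1-T_0^{(3)}$, the counts $\sum_{i\ge0}|X_{3i}|=n+1-v$, $\sum_{i\ge0}|X_{3i+1}|=n-v$, $\sum_{i\ge0}|X_{3i+2}|=n^2-n+2v$ (your $v$ is the paper's $\ell_0$), and the identities $|X_0|+\sum_{s\ge3}\binom{s-1}{2}|X_s|=(n-1)^2-\theta_0$ and $|Y_0|+\sum_{s\ge3}\binom{s-1}{2}|Y_s|=n^2-\theta_1$ are all correct. Your additional observation that evaluating $[\hat T^{(2)}T_0T_1]_e=[\hat T^{(2)}(H-e)]_e=2n$ against the mod-$3$ classification forces $v\equiv 0\pmod 3$, hence $v\equiv 0\pmod 6$, is valid and is not in the paper; it would even eliminate the subcase $\ell_0=2$ that the paper treats separately.

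The genuine gap is the closing step, which you leave open, and the mechanism you propose for it is not the one that can work. In Theorem \ref{th:n=1_even} the counting machinery alone yields exact values $|Y_1|=n+1$, $|Y_2|=2n$ and a parity clash among the $|Y_i|$'s. Here it does not: the chain of inequalities culminating in \eqref{eq:Gamma_4}, together with $|X_0|$ even, $|X_1|$ odd and $\theta_0\le 2n$, only pins down $|X_0|\in\{n-1,n+1\}$, and no combination of the counting identities forces some $|X_s|$ or $|Y_s|$ to be simultaneously odd and even. The paper's proof instead converts the counting output into structure: since every $t\in T_1\setminus T_0^{(3)}$ has coefficient divisible by $3$ in $\hat T^{(2)}T_0$ and $\sum_{i\ge 0}|X_{3i}|=n+1-\ell_0=|T_1\setminus T_0^{(3)}|$, one concludes $X_0=T_1$ or $X_0=T_1\setminus\{\gamma,\gamma^{-1}\}$. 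The first case contradicts \eqref{eq:T0T1} directly, because $T_0T_1=H-e$ puts every non-identity element of $\hat T^{(2)}$ in the form $t_0t_1$ and hence puts some $t_1\in T_1$ inside $\hat T^{(2)}T_0$. The second case yields, again via \eqref{eq:T0T1}, the identity $\gamma T_0+\gamma^{-1}T_0+e=T_0^{(2)}+T_1^{(2)}$, from which a short chase through $\gamma^2,\gamma^3,\dots,\gamma^7$ contradicts Lemma \ref{le:T_0T_1_nece_condi} (c) and (g). This structural argument is the actual heart of the proof and is entirely absent from your plan; the purely arithmetic balance you hope to exploit does not close the case $n\equiv 3\pmod 6$.
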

\begin{proof}
	As $n$ is odd, $|T_0|=k_0=n$ and $|T_1|=k_1=n+1$. 	
	Toward a contradiction, suppose that $T_0$ and $T_1$ exist such that \eqref{eq:T0T1} and \eqref{eq:T0^2+T1^2} hold.	
	
	First Lemma \ref{le:T_0T_1_nece_condi} tells us that $ 2k_0-k_1\equiv 2k_1-k_0\equiv 2\pmod{3} $. Under the assumption that $3\mid n$, 
	\begin{align}
	\label{eq:T_0^3_mod3}	\hat{T}^{(2)}T_0 &\equiv 2H+T_1-T_0^{(3)}\pmod{3},\\
	\nonumber	\hat{T}^{(2)}T_1 &\equiv 2H+T_0-T_1^{(3)}\pmod{3}.
	\end{align}
	Moreover, as $3\nmid |H|$, there is no repeating element in $T_0^{(3)}$ and $T_1^{(3)}$.
	
	Assume that there are $\ell_0$ elements in $T_1\cap T_0^{(3)}$ and $\ell_1$ elements in $T_0\cap T_1^{(3)}$. It follows that
	\begin{align*}
	\sum_{i\geq 0}|X_{3i+1}|=k_0-\ell_0,  \quad & \sum_{i\geq 0}|X_{3i+2}| = n^2-n+2\ell_0, & \sum_{i\geq 0}|X_{3i}| = k_1-\ell_0, \\
	\sum_{i\geq 0}|Y_{3i+1}|=k_1-\ell_1, \quad  & \sum_{i\geq 0}|Y_{3i+2}| = n^2-n+2\ell_1, & \sum_{i\geq 0}|Y_{3i}| = k_0-\ell_1. 
	\end{align*}
	In particular, for $2\nmid n$, 
	\begin{equation}
	\label{eq:X_i_n=0}	\sum_{i\geq 0}|X_{3i+1}|=n-\ell_0,  \quad  \sum_{i\geq 0}|X_{3i+2}| = n^2-n+2\ell_0, \quad  \sum_{i\geq 0}|X_{3i}| = n+1-\ell_0. \\
	\end{equation}
	
	The strategy of our proof is as follows: First we consider the possible nonnegative integer solutions for $|X_i|$'s satisfying \eqref{eq:X_i_n=0}. Several inequalities will provide us
	\begin{equation}\label{eq:X_0_size}
		\begin{cases}
		|X_0|=n+1, \\ 
		\ell_0=0,
		\end{cases}
		  \text{ or } \quad 
  		\begin{cases}
		  |X_0|=n-1, \\ 
		  \ell_0=0 \text{ or }2.
		  \end{cases}
	\end{equation}
	Then, we will show that such $X_0$ cannot exist.
	 
	 \medskip
	 Next we provide the details of the proof of \eqref{eq:X_0_size}.
	Now \eqref{eq:sum_|X_i|-0} becomes 
	\[\sum_{i=1}^{M_0}|X_i|=3n+\theta_0+ \sum_{s\geq 3} \frac{(s-1)(s-2)}{2}|X_s|.\]	
	Subtract \eqref{eq:sum_|X_i|} from \eqref{eq:sum_|X_i|-0}, 
	\[
	-|X_0| = -n^2+2n-1+\theta_0+\sum_{s\geq 3} \frac{(s-1)(s-2)}{2}|X_s|,
	\]
	which means
	\begin{equation}\label{eq:Gamma_0}
		n^2-|X_0|-2n-\theta_0+1 = \sum_{s\geq 3} \frac{(s-1)(s-2)}{2}|X_s|\leq \frac{M_0-1}{2}\sum_{s\geq 3} (s-2)|X_s|.
	\end{equation}
	On the other hand, by \eqref{eq:sum_i|X_i|}, \eqref{eq:sum_|X_i|} and \eqref{eq:X_i_n=0},  
	\begin{align}
	\nonumber	&\sum_{i\geq 1}3i(|X_{3i+1}| + |X_{3i+2}| ) + \sum_{i\geq 1} (3i-2)|X_{3i}|\\ 
	\label{eq:Gamma1}	=& \sum_{i\geq 1} i|X_i| - \sum_{i\geq 0}|X_{3i+1}| - 2\sum_{i\geq 0}|X_{3i+2}|-\sum_{i\geq 0} 2|X_{3i}| +2|X_0|\\
	\nonumber	=& 2|X_0|-\ell_0-2.
	\end{align}
	As $M_0>1$, by Lemma \ref{le:e_in_X_1}, $|X_{M_0}|$ is even, and $|X_{M_0}|\geq 2$. By \eqref{eq:Gamma1},
	\[  (M_0-2)2\leq   2|X_0|-\ell_0-2,\]
	which means
	\begin{equation}\label{eq:M_0<=}
		M_0 \leq |X_0|+1-\frac{\ell_0}{2}.
	\end{equation}
	Moreover, by \eqref{eq:Gamma1}, we also have
	\[
	\sum_{s\geq 3}(s-2)|X_s| \leq2|X_0|-\ell_0-2.
	\]
	Plugging it and \eqref{eq:M_0<=} into \eqref{eq:Gamma_0}, we have
	\begin{align}
	\nonumber		n^2-|X_0|-2n-\theta_0+1 &\leq \frac{M_0-1}{2}(2|X_0|-\ell_0-2)\\
	\label{eq:Gamma_4}	&\leq (|X_0|-\ell_0/2)(|X_0|-\ell_0/2-1)\\
	\nonumber	&\leq |X_0|(|X_0|-1).
	\end{align}
	By $|X_0|\leq \sum_{i\geq0}|X_{3i}|=k_1-\ell_0\leq n+1$ and a direct computation with the above inequality using $\theta_0\leq 2n$, we derive that $|X_0|=n+1, n$, $n-1$ or $n-2$. Moreover, $|X_1|$ is odd by Lemma \ref{le:e_in_X_1}. Consequently, by \eqref{eq:X_i_n=0}, $\ell_0$ must be even for odd $n$. Taking \eqref{eq:Gamma_4} and $\sum_{i\geq 0}|X_{3i}| = n+1-\ell_0$ into account, we get four possible cases
	\begin{enumerate}[label=(\Roman*)]
		\item $|X_0|=n+1$ and $\ell_0=0$;
		\item $|X_0|=n-1$ and $\ell_0=0$ or $2$;
		\item $|X_0|=n$;
		\item $|X_0|=n-2$.
	\end{enumerate}
	By Lemma \ref{le:e_in_X_1}, $|X_0|$ must be even. Consequently Case (III) and Case (IV) are impossible, because $n$ is assumed to be odd.
	
	Hence, we have proved \eqref{eq:X_0_size}.
	\medskip

	\textbf{Determination of $X_0$}. 	Next we show that
	\begin{equation}\label{eq:determining_X_0}
		X_0=\begin{cases}
		T_1, & |X_0|=n+1;\\
		T_1\setminus \{\gamma,\gamma^{-1}\}, & |X_0|=n-1,
		\end{cases}
	\end{equation}
	where $\gamma$ is an element in $T_1$. Afterwards we will finish the whole proof by getting some contradiction from \eqref{eq:determining_X_0}.
	
	Recall
	\begin{equation}
	\hat{T}^{(2)}T_0 = (n-1)H+T_1-T_0^3+2nT_0. \tag{\ref{eq:T_0^3}}
	\end{equation}
	
	First we consider the case with $\ell_0=0$. For an arbitrary element $t\in T_1$, since $\ell_0=|T_1\cap T_0^{(3)}|=0$, $t$ can only be represented as
	\[ abc \text{ for some pairwise distinct }a,b,c\in T_0,\quad \text{or} \quad  aab   \text{ for some distinct }a,b\in T_0,\]
	in $T_0^3$.
	
	Taking account of the ordering of the elements in the representations, the coefficient of $t$ in $T_0^3\in \Z[H]$ is divisible by $3$.  Together with $n\equiv 0\pmod{3}$ and $T_1\cap T_0=\emptyset$, the coefficient of $t$ in the right-hand side of \eqref{eq:T_0^3} must be congruent to $0$ modulo $3$.  Since $\sum_{i\geq 0} |X_{3i}|=n+1$ and $|T_1|=n+1$, 
	\begin{equation*}
		X_0=
		\begin{cases}
		T_1, & |X_0|=n+1;\\
		T_1\setminus\{\gamma,\gamma^{-1}\}, & |X_0|=n-1,
		\end{cases}
	\end{equation*}
	 for some $\gamma\in T_1$.
	
	When $\ell_0=2$, by \eqref{eq:X_0_size}, $|X_0|=n-1$.  By our definition of $\ell_0$,  there exist exactly two elements $\gamma$ and $\gamma^{-1}\in T_1$ such that $\gamma,\gamma^{-1}\in T_0^{(3)}$. The coefficients of them in the right-hand side of \eqref{eq:T_0^3} must be congruent to $1$ modulo $3$, and the coefficient of any element in $T_1\setminus\{\gamma,\gamma^{-1}\}$ is still congruent to $0$ modulo $3$. Thus $X_0=T_1\setminus\{\gamma,\gamma^{-1}\}$.

	Therefore, we have obtained \eqref{eq:determining_X_0}.
	
	In the final step, we show that \eqref{eq:determining_X_0} cannot hold which concludes the proof of Theorem \ref{th:n=0_odd}.
	
	If $T_1=X_0$, then \eqref{eq:T_0^3} tells us
	\[
	(T_0^{(2)}+T_1^{(2)})T_0\cap T_1=\emptyset,
	\]
	which cannot hold, because $T_0T_1=H-e$ means for any non-identity $a\in T_0^{(2)}+T_1^{(2)}$ there always exist $t_0\in T_0$ and $t_1\in T_1$ such that $t_0t_1=a$.
	
	If $T_1\setminus\{\gamma, \gamma ^{-1}\}=X_0$, then \eqref{eq:T_0^3} tells us
	$t\notin (T_0^{(2)}+T_1^{(2)})T_0$ for any $t\in T_1\setminus\{\gamma, \gamma^{-1}\}$. By \eqref{eq:T0T1}, 
	\[ (T_1-\gamma -\gamma^{-1})  T_0 = H-e-(\gamma T_0 + \gamma^{-1}T_0). \]
	For any $x\in T_0$, if $tx=y$, then $t=x^{-1}y$ which means $y\notin T_0^{(2)}+T_1^{(2)}$. 
	Thus the above equation implies
	\begin{equation}\label{eq:gammaT_0+gammaT_1}
		\gamma T_0 + \gamma^{-1}T_0 +e= T_0^{(2)}+T_1^{(2)}.
	\end{equation}
	As $\gamma^2\in T_1^{(2)}$, 
	\[ \gamma^2=\gamma a \quad \text{or} \quad  \gamma^2 =\gamma^{-1}a \]
	for some $a\in T_0\setminus \{e\}$. The first case is impossible because $T_0\cap T_1=\emptyset$.  The second case implies that $a=\gamma^3\in T_0\cap T_1^{(3)}$. 
	By \eqref{eq:gammaT_0+gammaT_1}, $\gamma a=\gamma^4$ belongs to $T_0^{(2)}$ or $T_1^{(2)}$. 
	
	If $\gamma^4\in T_0^{(2)}$, then $\gamma^2\in T_0$ which means $T_0 \cap T_1^{(2)}\neq \emptyset$. It contradicts Lemma \ref{le:T_0T_1_nece_condi} (c). 
	
	For $\gamma^4\in T_1^{(2)}$, we need more detailed analysis. As $a^2\in T_0^{(2)}$, by \eqref{eq:gammaT_0+gammaT_1} there exists $b\in T_0$ such that 
	\[ a^2=\gamma^6 =  \gamma b, \text{ or } \gamma^{-1}b, \]
	 which means $b=\gamma^5$ or $\gamma^7\in T_0$. If $b=\gamma^5\in T_0$, then 
	 \[T_1^{(2)}\ni \gamma^2=\gamma^5 \cdot \gamma^{-3}=b\cdot a^{-1}\in T_0^2 \] 
	 whence $\gamma^2\in T_0^2 \cap T_1^{(2)}$ which is not possible by Lemma \ref{le:T_0T_1_nece_condi} (g); if $b=\gamma^7\in T_0$, then $\gamma^4=b \gamma^{-3}=ba^{-1}\in T_0^2$. Recall that $\gamma^4\in T_1^{(2)}$. Hence $\gamma^4\in T_0^2\cap T_1^{(2)}$ which contradicts Lemma \ref{le:T_0T_1_nece_condi} (g) again.

	To summarize, we have proved that there exist no $T_0$ and $T_1$ satisfying \eqref{eq:T0T1} and \eqref{eq:T0^2+T1^2}. 
\end{proof}

The proof for even $n$ which is congruent to $0$ modulo $3$ is similar, but we have to handle some extra problems.
\begin{theorem}\label{th:n=0_even}
	For $n\equiv 0 \pmod{6}$, there is no inverse-closed subsets $T_0$ and $T_1\subseteq H$ with $e\in T_0$ and $k_0+k_1=2n+1$ satisfying \eqref{eq:T0T1} and \eqref{eq:T0^2+T1^2}.
\end{theorem}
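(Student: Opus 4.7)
I follow the strategy of Theorem~\ref{th:n=0_odd} with the parameters $k_0=n+1$ and $k_1=n$ reversed. The mod-$3$ reduction $\hat{T}^{(2)}T_0\equiv 2H+T_1-T_0^{(3)}\pmod 3$ yields $\sum_{i\ge 0}|X_{3i}|=n-\ell_0$; combining this with the inequality of Lemma~\ref{le:in-ex_X_Y}, the parity statements of Lemma~\ref{le:e_in_X_1}, and the bound $\theta_0\le 2n$ forces $\ell_0=0$ and $|X_0|\in\{n,n-2\}$ for $n\ge 6$. Hence either $X_0=T_1$, eliminated exactly as in the odd proof by taking $a\in\hat{T}^{(2)}\setminus\{e\}$ and writing $a=t_0t_1$ to produce $t_1\in T_1\cap\hat{T}^{(2)}T_0$, or $X_0=T_1\setminus\{\gamma,\gamma^{-1}\}$ for some $\gamma\in T_1$.

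In the second case, expanding $(T_1-\gamma-\gamma^{-1})T_0=(H-e)-(\gamma+\gamma^{-1})T_0$ and comparing supports gives $\hat{T}^{(2)}\subseteq\{e\}\cup\gamma T_0\cup\gamma^{-1}T_0$, with $\gamma T_0\cap\gamma^{-1}T_0=\emptyset$ forced by non-negativity of coefficients. The new feature compared to the odd case is that the right-hand side has size $2n+3$ while $|\hat{T}^{(2)}|=2n+1$, so $\hat{T}^{(2)}$ is obtained by removing a single inverse-closed pair $\{\delta,\delta^{-1}\}$ with $\delta\in\gamma T_0$ and $\delta^{-1}\in\gamma^{-1}T_0$.

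Exactly as in Theorem~\ref{th:n=0_odd}, $\gamma^2\in T_1^{(2)}\subseteq\hat{T}^{(2)}$ then forces $a:=\gamma^3\in T_0$, and I examine $\gamma^4=\gamma a\in\gamma T_0$. If $\gamma^4\in\hat{T}^{(2)}$, the odd-case chain transfers verbatim: routing $a^2=\gamma^6$ through $\hat{T}^{(2)}$ produces an element of $T_0^2\cap T_1^2\setminus\{e\}$, contradicting Lemma~\ref{le:T_0T_1_nece_condi}(g). If instead $\gamma^4=\delta$ (the case $\gamma^4=\delta^{-1}$ being symmetric), then $\gamma^2\notin T_0\cup T_1$: $\gamma^2\in T_0$ would give $\gamma^2\in T_0\cap T_1^{(2)}$, contradicting Lemma~\ref{le:T_0T_1_nece_condi}(c); and $\gamma^2\in T_1$ would give $\gamma^4\in T_1^{(2)}\subseteq\hat{T}^{(2)}$, contradicting $\gamma^4=\delta\notin\hat{T}^{(2)}$. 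Analysing $\gamma^6=a^2\in T_0^{(2)}\subseteq\hat{T}^{(2)}$, and using $3,5\nmid n^2+n+1$ for $n\equiv 0\pmod 6$ to rule out $\gamma^6\in\{e,\delta,\delta^{-1}\}$, I am left with $\gamma^5\in T_0$ or $\gamma^7\in T_0$; the former yields $\gamma^2\in T_0^2\cap T_1^2\setminus\{e\}$, again forbidden by Lemma~\ref{le:T_0T_1_nece_condi}(g).

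The main obstacle will be the remaining subcase $\gamma^7\in T_0$ with $\gamma^5\notin T_0$, where $\gamma^4=\delta\notin\hat{T}^{(2)}$ blocks the odd-case ``$\gamma^4\in T_0^2\cap T_1^{(2)}$'' argument. I plan to close this subcase by a finer modular coefficient count in $\hat{T}^{(2)}T_0$: for each exponent $r$, the coefficient of $\gamma^r$ equals $|\hat{T}^{(2)}\cap\gamma^r T_0|$, which is constrained modulo~$3$ by the reduction formula, while the explicit description $\hat{T}^{(2)}=\{e\}\cup\gamma T_0\cup\gamma^{-1}T_0\setminus\{\delta,\delta^{-1}\}$ pins down this size for small $r$. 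Running this comparison for a handful of $r$, in tandem with Lemma~\ref{le:T_0T_1_nece_condi}(g),(h), should force a modular contradiction, with Proposition~\ref{prop:known} covering the $n$ for which $7,19$, or $31$ divides $n^2+n+1$.
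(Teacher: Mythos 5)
There is a genuine gap, and it sits at the very first step: for even $n$ you convolve $\hat{T}^{(2)}$ with the wrong factor. Since $n\equiv 0\pmod 6$ gives $k_0=n+1$ and $k_1=n$, the identity of Lemma \ref{le:in-ex_X_Y} applied to $\hat{T}^{(2)}T_0$ reads $\sum_{i\ge 1}|X_i|=(2n+1)(n+1)-2n(n+1)+\theta_0+\sum_{s\ge 3}\tfrac{(s-1)(s-2)}{2}|X_s|=(n+1)+\theta_0+\cdots$, not $3n+\theta_0+\cdots$ as in the odd case; subtracting from \eqref{eq:sum_|X_i|} gives $\sum_{s\ge 3}\tfrac{(s-1)(s-2)}{2}|X_s|=n^2-|X_0|-\theta_0$. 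Meanwhile the companion computation from \eqref{eq:sum_i|X_i|} (the analogue of \eqref{eq:Gamma1}) now yields $\sum_{i\ge1}3i(|X_{3i+1}|+|X_{3i+2}|)+\sum_{i\ge1}(3i-2)|X_{3i}|=2|X_0|+2n-\ell_0$ rather than $2|X_0|-\ell_0-2$, because $\sum_{i\ge0}|X_{3i}|=n-\ell_0$ and $\sum_{i\ge0}|X_{3i+1}|=n+1-\ell_0$ have swapped sizes relative to the odd case. Chaining the bounds as before gives only $n^2-|X_0|-\theta_0\le(|X_0|+n+1-\ell_0/2)(|X_0|+n-\ell_0/2)$, whose right-hand side is about $4n^2$ for $|X_0|\le n$; this is satisfied for \emph{every} admissible $|X_0|$ and forces neither $\ell_0=0$ nor $|X_0|\in\{n,n-2\}$. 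The counting is tight only when one multiplies by the set of size $n$, which for even $n$ is $T_1$; this is exactly why the paper switches to $\hat{T}^{(2)}T_1$ and the $Y_i$'s here.

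Making that switch, however, creates the problem your setup would have avoided: Lemma \ref{le:e_in_X_1} is specific to $\hat{T}^{(2)}T_0$, so for the $Y_i$'s one cannot assert $|Y_{M_1}|\ge 2$ for free ($e$ could a priori be the unique element of maximal multiplicity in $\hat{T}^{(2)}T_1$). Establishing $|Y_{M_1}|\ge 2$ is the paper's Claim 1 and occupies the bulk of its proof, via bounds on $M_1$ and $M_1'$ and an optimization of the quadratic $F(|Y_0|,M_1)$, together with the reduction to $n\ge 30$ by Result \ref{result:prime} and Proposition \ref{prop:known}. Your proposal contains no substitute for this. Finally, even granting your claimed reduction, the endgame is incomplete by your own admission: the subcase $\gamma^7\in T_0$ with $\gamma^4=\delta\notin\hat{T}^{(2)}$ is only sketched as a plan for ``a finer modular coefficient count,'' whereas the paper's corresponding final step (with $Y_0=T_0$, $T_0\setminus\{e\}$, or $T_0\setminus\{\gamma,\gamma^{-1}\}$) closes every branch with explicit contradictions to Lemma \ref{le:T_0T_1_nece_condi}.
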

\begin{proof}
	Now $n$ is divisible by $2$ and $3$, $|T_0|=k_0=n+1$ and $|T_1|=k_1=n$. 	
	Toward a contradiction, suppose that $T_0$ and $T_1$ exist such that \eqref{eq:T0T1} and \eqref{eq:T0^2+T1^2} hold.	
	The smallest possible value of $n$ equals $6$. However, throughout the rest part of the proof, we can always assume that 
	\begin{equation}\label{eq:n=0_even_n>=30}
	n\geq 30.
	\end{equation}
	The reason is as follows: for $n=6,12$ and $24$, $n^2+n+1$ is prime which means the nonexistence result is already covered by Result \ref{result:prime}. For $n=18$, it can be excluded by Proposition \ref{prop:known}.
	
	Instead of looking at $X_i$'s as in the proof of Theorem \ref{th:n=0_odd}, we consider $Y_i$'s. By \eqref{eq:T_1^3},
	\begin{equation}\label{eq:T_1^3_mod3}
	\hat{T}^{(2)}T_1 \equiv 2H+T_0-T_1^{(3)}\pmod{3}.
	\end{equation}
	It follows that
	\begin{align}\label{eq:0:even:Y_i_n=0}
		\sum_{i\geq 0}|Y_{3i+1}|=n-\ell_1,  \quad & \sum_{i\geq 0}|Y_{3i+2}| = n^2-n+2\ell_1, & \sum_{i\geq 0}|Y_{3i}| = n+1-\ell_1,
	\end{align}
	where $\ell_1$ is defined to be the cardinality of $T_0\cap T_1^{(3)}$, 
	and 	\eqref{eq:sum_|Y_i|-0} becomes 
	\[\sum_{i=1}^{M_1}|Y_i|=3n +\theta_1 + \sum_{s\geq 3} \frac{(s-1)(s-2)}{2}|Y_s|.\]
	By subtracting \eqref{eq:sum_|X_i|} from \eqref{eq:sum_|Y_i|-0},	we get
	\begin{equation}\label{eq:0:even:Gamma_0}
		n^2-|Y_0|-2n+1 -\theta_1= \sum_{s\geq 3} \frac{(s-1)(s-2)}{2}|Y_s|.
	\end{equation}
	On the other hand, by \eqref{eq:sum_i|Y_i|}, \eqref{eq:sum_|X_i|} and \eqref{eq:0:even:Y_i_n=0},  
	\begin{align}
		\nonumber	&\sum_{i\geq 1}3i(|Y_{3i+1}| + |Y_{3i+2}| ) + \sum_{i\geq 1} (3i-2)|Y_{3i}|\\ 
		\label{eq:0:even:Gamma1}	=& \sum_{i\geq 1} i|Y_i| - \sum_{i\geq 0}|Y_{3i+1}| - 2\sum_{i\geq 0}|Y_{3i+2}|-\sum_{i\geq 0} 2|Y_{3i}| +2|Y_0|\\
		\nonumber	=& 2|Y_0|-\ell_1-2.
	\end{align}
	It is easy to check that \eqref{eq:0:even:Gamma1} is the same as \eqref{eq:Gamma1} by replacing $X_i$ with $Y_i$, and $\ell_0$ with $\ell_1$.
	
	As we do not know whether $|Y_{M_1}|\geq 2$, we cannot immediately follow the proof of Theorem \ref{th:n=0_odd} to determine the possible value of $|Y_i|$'s. Next we concentrate on  the proof of the following claim.
	
	\medskip
	\textbf{Claim 1.} $|Y_{M_1}|\geq 2$.
	\medskip
	
	Toward a contradiction, suppose that $|Y_{M_1}|=1$ which is equivalent to $Y_{M_1}=\{e\}$. By 
	\[
	\hat{T}^{(2)}T_1 =\sum_{i=1}^{M_1} i Y_i,
	\]
	we know that $M_1=|\hat{T}^{(2)}\cap T_1|=|T_0^{(2)}\cap T_1| + |T_1^{(2)}\cap T_1|\leq k_1=n$, and $2\mid M_1$ because $|T_0^{(2)}\cap T_1|$ and $|T_1^{(2)}\cap T_1|$ are both even.	Moreover, by \eqref{eq:T_1^3_mod3}, $e\in T_0$ and $e\notin T_1^{(3)}$, we see that $3\mid M_1$. Therefore, we have obtained the following restriction on $M_1$:
	\begin{equation}\label{eq:M1_res}
		6\leq M_1\leq n, \text{ and }6\mid M_1.
	\end{equation}
	
	By \eqref{eq:0:even:Gamma1}, we obtain $M_1-2\leq 2|Y_0|-\ell_1-2$, which means
	\begin{equation}\label{eq:M1_upper_Y0}
		M_1\leq 2|Y_0|-\ell_1.
	\end{equation}
	
	Let $M_1' = \max \{i<M_1: |Y_i|>0\}$. If $M'_1<3$, then by \eqref{eq:0:even:Y_i_n=0}, \eqref{eq:M1_res} and $|Y_{M_1}|=1$, we have
	\begin{align*}
			|Y_0|&=n-\ell_1,\\
			|Y_1|&=n-\ell_1,\\
			|Y_2|&=n^2-n+2\ell_1.
	\end{align*}
	Plugging them in to \eqref{eq:0:even:Gamma_0}, we get
	\[
	n^2-3n+\ell_1+1 -\theta_1=\frac{(M_1-1)(M_1-2)}{2}\leq \frac{n^2-3n+2}{2},
	\]
	where the last inequality comes from \eqref{eq:M1_res}. Recall that $\theta_1\leq 2n$. Thus 
	\[
		n^2-3n+\ell_1+1 - 2n\leq  \frac{n^2-3n+2}{2}.
	\]
	It implies that $n\leq 7$ which contradicts our assumption \eqref{eq:n=0_even_n>=30}. Therefore $M'_1\geq 3$.
	
	As $e\notin Y_{M'_1}$, by the inverse-closed property of $\hat{T}^{(2)}$ and $T_1$, $|Y_{M'_1}|$ must be even. In particular,  $|Y_{M'_1}|\geq 2$. By \eqref{eq:0:even:Gamma1}, we get
	\[(M_1'-2)\cdot 2+(M_1-2) \leq 2|Y_0| -\ell_1-2, \]
	which means
	\begin{equation}\label{eq:M1'_upper}
		M'_1\leq |Y_0|-\frac{\ell_1+M_1}{2}+2.
	\end{equation}
	By \eqref{eq:0:even:Gamma1}, we also get
	\begin{equation}\label{eq:Y0_further}
		2|Y_0| -\ell_1-2\geq \sum_{3\leq s\leq M'_1} (s-2)|Y_s|+(M_1-2).
	\end{equation}
	By \eqref{eq:0:even:Gamma_0}, 
	\[
		n^2-|Y_0|-2n+1 -\theta_1\leq \frac{M'_1-1}{2} \sum_{3\leq s\leq M'_1} (s-2)|Y_s| + \frac{(M_1-1)(M_1-2)}{2}.
	\]
	Plugging \eqref{eq:Y0_further} and \eqref{eq:M1'_upper} into it, we get
	\begin{align*}
	n^2-|Y_0|-2n+1 -\theta_1&\leq \frac{M'_1-1}{2} (2|Y_0|-\ell_1-M_1) + \frac{(M_1-1)(M_1-2)}{2}\\
		&\leq \left(
		|Y_0|-\frac{M_1+\ell_1}{2}+1
		\right)\left(
		|Y_0|-\frac{M_1+\ell_1}{2}
		\right)+ \frac{(M_1-1)(M_1-2)}{2}  
		\\
		&\leq \left(
		|Y_0|-\frac{M_1}{2}+1
		\right)\left(
		|Y_0|-\frac{M_1}{2}
		\right)+ \frac{(M_1-1)(M_1-2)}{2}.
	\end{align*}
	We add $|Y_0|$ on both sides of the above inequality and get
	\begin{equation}\label{eq:YM1>1_long}
		n^2-2n+1-\theta_1 \leq F(|Y_0|, M_1),
	\end{equation}
	  where
	\[
		F(x,y) := x^2+\frac{3}{4}y^2-xy+2x-2y+1.
	\]
	Recall that $|Y_0|\leq n-\ell_1$ by the third equation in \eqref{eq:0:even:Y_i_n=0} and \eqref{eq:M1_res}, we only have to consider the value of $F(x,y)$ for $(x,y)\in [0,n]\times [6,n]$. Our goal is to show that the value of $F(x,y)$ is always smaller than $n^2-2n+1-\theta_1$ which means \eqref{eq:YM1>1_long} cannot hold.
	
	It is easy to see that  $F$ defines an elliptic paraboloid and the minimum value is attained at point $(-1/2, 1)$. 
	 Hence, the maximum value of $F(x,y)$ for $(x,y)\in [0,n]\times [6,n]$ can be obtained only if $x=n$ or $y=n$.
	 
	 For $x=n$, 
	 \[
	 F(n,y) = \frac{3}{4}y^2-(n+2)y+n^2+2n+1.
	 \]
	 Its maximum value for $y\in [6,n]$ with $6\mid y$ is 
	 \[F(n,6) =n^2-4n+16,\]
	 and $F(n,12)= n^2-10n + 85$ is the second largest value under the assumption \eqref{eq:n=0_even_n>=30}.
	 
	 First, we can simply check that $y=6$ is impossible: Now $M_1=6$ and $|Y_0|=n$, which means $\ell_1=0$ and $|Y_3|=0$.  By \eqref{eq:0:even:Y_i_n=0}, \eqref{eq:sum_|Y_i|-0} and \eqref{eq:0:even:Gamma_0}, 
	 \begin{align*}
	 	|Y_1|+|Y_4|&=n,\\
	 	|Y_2|+|Y_5|&=n^2-n,\\
	 	|Y_1|+2|Y_2|+4|Y_4|+5|Y_5|&=2n^2+n-6,\\
	 	3|Y_4|+6|Y_5|&=n^2-3n+1-\theta_1-10.	 	
	 \end{align*}
	 Consequently, $|Y_1|=\frac{n^2}{3}-\frac{4n}{3}-\frac{\theta_1}{3}+1$ which must be smaller than or equal to $n$ by the first equation. Recall that $\theta_1\leq 2n$. Thus
	 \[
	 \frac{n^2}{3}-{2n}+1\leq n,
	 \]
	 which implies $n<9$. It contradicts our assumption \eqref{eq:n=0_even_n>=30}.
	 
	For the second largest value $F(n,12)$, 
	as $\theta_1\leq 2n$, if \eqref{eq:YM1>1_long} holds, then
	\[
	n^2-4n+1\leq n^2-2n+1 -\theta_1\leq n^2-10n + 85,
	\]
	which means $n\leq 14$. But these value have already been excluded by \eqref{eq:n=0_even_n>=30}. Therefore, 
	\[
	n^2-2n+1-\theta_1 > F(n,y),
	\]
	for any $y\in [6,n]$ with $6\mid y$.
	 
	 For $y=n$, 
	 \[
	 F(x,n) = x^2 - (n-2)x+\frac{3}{4}n^2-2n+1.
	 \]
	Its maximum value for $x\in [0,n]$ is 
	\[F(n,n) =\frac{3}{4}n^2+1.  \]
	It is larger than or equal to $n^2-4n+1$ if and only if $n\leq 16$, which again contradicts  \eqref{eq:n=0_even_n>=30}.
	
	Therefore, \eqref{eq:YM1>1_long} cannot hold which means that we have proved \textbf{Claim 1}.
	\medskip
	
	Following the proof of Theorem \ref{th:n=0_odd}, our next goal is to show that the only possible value of $|Y_0|$ are as follows:
	\begin{equation}\label{eq:Y_0_size}
		\begin{cases}
		|Y_0|=n+1, \\ 
		\ell_1=0,
		\end{cases}
		\text{ or } \quad 
		\begin{cases}
		|Y_0|=n-1, \\ 
		\ell_1=0 \text{ or }2,
		\end{cases}
		\text{ or } \quad 
		\begin{cases}
		|Y_0|=n, \\ 
		\ell_1=0.
		\end{cases}
	\end{equation}

	By replacing $Y_i$'s with $X_i$'s, one can get \eqref{eq:X_i_n=0} and \eqref{eq:Gamma1} from \eqref{eq:0:even:Y_i_n=0} and \eqref{eq:0:even:Gamma1}, respectively. 
	Moreover, \eqref{eq:Gamma_0}  becomes \eqref{eq:0:even:Gamma_0} if we change $X_i$'s to $Y_i$'s and change $\theta_0$ to $\theta_1$.
	
	Hence, by the same argument for \eqref{eq:Gamma_4}, we get
	\begin{align}
	\nonumber		n^2-|Y_0|-2n+1-\theta_1 &\leq \frac{M_1-1}{2}(2|Y_0|-\ell_1-2)\\
	\label{eq:even:Gamma_4}	&\leq (|Y_0|-\ell_1/2)(|Y_0|-\ell_1/2-1)\\
	\nonumber	&\leq |Y_0|(|Y_0|-1).
	\end{align}
	Taking account of $\theta_1\leq 2n$, we get $|Y_0|=n-1, n$ or $n+1$ under the assumption \eqref{eq:n=0_even_n>=30}. Note that $\ell_1=|T_0\cap T_1^{(3)}|$ must be even, because $T_0$ and $T_1$ are inverse-closed and $e\notin T_1^{(3)}$. Taking \eqref{eq:even:Gamma_4} and $|Y_0|\leq \sum_{i\geq 0} |Y_{3i}|=n+1-\ell_1$ into account, we get four possible cases
	\begin{enumerate}[label=(\Roman*)]
		\item $|Y_0|=n+1$ and $\ell_1=0$;
		\item $|Y_0|=n-1$ and $\ell_1=0$ or $2$;
		\item $|Y_0|=n$, $\ell_1=0$;
		\item $|Y_0|=n-2$, $\ell_1=0$.
	\end{enumerate}
	To prove \eqref{eq:Y_0_size}, we only have to exclude Case (IV).

	\textbf{Case (IV).} 
	By \eqref{eq:0:even:Gamma1} and \textbf{Claim 1}, 
	\[
	(M_1-2)2\leq 2|Y_0|-\ell_1-2,
	\]
	which implies 
	\begin{equation*}
		M_1\leq |Y_0|+1-\frac{\ell_0}{2}=n-2+1=n-1.
	\end{equation*}
	On the other hand, by \eqref{eq:even:Gamma_4}, 
	\[
	M_1\geq 1+ \frac{n^2-(n-2)-2n+1-\theta_1}{(n-2)-1}\geq n-1.
	\]
	Therefore $M_1$ must be $n-1$.  Plugging $\ell_1=0$ into the last equation of \eqref{eq:0:even:Y_i_n=0}, we get
	\begin{equation}\label{eq:n:even_IV}
		\sum_{i\geq 0}|Y_{3i}| = n+1.
	\end{equation}
	By \eqref{eq:0:even:Gamma1} and $M_1=n-1 \equiv 2 \mod 3$,
	\[
	2n-6=2|Y_0|-\ell_1-2=\sum_{i\geq 1}3i|Y_{3i+1}| + \sum_{i\geq 1, 3i+2\neq M_1}3i|Y_{3i+2}| + 2(n-1-2)+\sum_{i\geq 1} (3i-2)|Y_{3i}|.
	\]
	As a consequence, $|Y_3|=|Y_4|=\cdots=|Y_{M_1-1}|=0$. However, it means $\sum_{i\geq 0} |Y_{3i}|=|Y_0|=n-2$ which contradicts \eqref{eq:n:even_IV}.
	
	Therefore, we have proved \eqref{eq:Y_0_size}.
	\medskip
	
	\textbf{Determination of $Y_0$}. We basically follow the argument used in the proof of Theorem \ref{th:n=0_odd}. Our goal is to prove
		\begin{equation}\label{eq:determining_Y_0}
	Y_0=\begin{cases}
	T_0, &\text{ if }|Y_0|=n+1;\\
	T_0\setminus\{e\}, &\text{ if } |Y_0|=n;\\
	T_0\setminus \{\gamma,\gamma^{-1}\}, &\text{ if } |Y_0|=n-1,
	\end{cases}
	\end{equation}
	for some $\gamma\in T_0$.
	
	Recall
	\begin{equation}
	\hat{T}^{(2)}T_1 = (n-1)H+T_0-T_1^3+2nT_1. \tag{\ref{eq:T_1^3}}
	\end{equation}
	
	First let us consider the case with $\ell_1=0$. For an arbitrary element $t\in T_0$, as $\ell_1=|T_0\cap T_1^{(3)}|=0$, $t$ can only be represented as
	\[ abc \text{ for some pairwise distinct }a,b,c\in T_1,\quad \text{or} \quad  aab   \text{ for some distinct }a,b\in T_1,\]
		in $T_1^3$.
	
	Taking account of the ordering of the elements in the representations, the coefficient of $t$ in $T_1^3\in \Z[H]$ is divisible by $3$.  Together with $n\equiv 0\pmod{3}$ and $T_1\cap T_0=\emptyset$, the coefficient of $t\in T_0$ in the right-hand side of \eqref{eq:T_1^3} must be congruent to $0$ modulo $3$. 	
	Since $\sum_{i\geq 0} |Y_{3i}|=n+1$, $|T_0|=n+1$ and $Y_0$ is inverse-closed, we obtain \eqref{eq:determining_Y_0}.
	
	When $\ell_1=2$ which happens only if $|Y_0|=n-1$ by \eqref{eq:Y_0_size},  there exist exactly two elements $\gamma, \gamma^{-1}\in T_0\cap  T_1^{(3)}$. The coefficients of $\gamma$ or $\gamma^{-1}$ in the right-hand side of \eqref{eq:T_1^3} must be congruent to $1$ modulo $3$, and the coefficient of any element in $T_0\setminus\{\gamma,\gamma^{-1}\}$ is still congruent to $0$ modulo $3$. Thus $Y_0=T_0\setminus\{\gamma,\gamma^{-1}\}$ which corresponds to the last case in \eqref{eq:determining_Y_0}.
	\medskip

	The final part of our proof is to show that \eqref{eq:determining_Y_0} cannot hold which concludes the proof of Theorem \ref{th:n=0_even}.
	
	If $T_0=Y_0$, then \eqref{eq:T_1^3} tells us
	\[
	(T_0^{(2)}+T_1^{(2)})T_1\cap T_0=\emptyset,
	\]
	which cannot hold, because $T_0T_1=H-e$ means for any non-identity $a\in T_0^{(2)}+T_1^{(2)}$ there always exist $t_0\in T_0$ and $t_1\in T_1$ such that $t_0t_1=a$.
	
	If $T_0\setminus\{ e\}=Y_0$, then $t\notin (T_0^{(2)}+T_1^{(2)})T_1$ for any $t\in T_0\setminus\{e\}$. By \eqref{eq:T0T1}, 
	\[ Y_0T_1=(T_0-e)  T_1 = H-e-T_1. \]
	As $Y_0\cap(T_0^{(2)}+T_1^{(2)})T_1=\emptyset$ implies $Y_0T_1 \cap (T_0^{(2)}+T_1^{(2)})=\emptyset$, 
	\[T_0^{(2)}+T_1^{(2)}\subseteq e+T_1,\]
	which is impossible by the sizes of them.
	
	If $T_0\setminus\{ \gamma, \gamma^{-1}\}=Y_0$ for some $\gamma\neq e$, then $t\notin (T_0^{(2)}+T_1^{(2)})T_1$ for any $t\in T_0\setminus\{\gamma, \gamma^{-1}\}$. By \eqref{eq:T0T1}, 
	\[ Y_0T_1=(T_0-\gamma -\gamma^{-1})  T_1 = H-e-(\gamma T_1 + \gamma^{-1}T_1). \]
	As $Y_0\cap(T_0^{(2)}+T_1^{(2)})T_1=\emptyset$ implies $Y_0T_1 \cap (T_0^{(2)}+T_1^{(2)})=\emptyset $,  the above equation and the sizes of $T_0$ and $T_1$ lead to
	\begin{equation}\label{eq:cT_0+cT_1}
			\gamma T_1 + \gamma^{-1}T_1+e=T_0^{(2)}+T_1^{(2)}.
	\end{equation}
		
	Since $\gamma^2\in T_0^{(2)}$, 
	\[ \gamma^2=\gamma \alpha \quad \text{or} \quad  \gamma^2 =\gamma^{-1}\alpha \]
	for some $\alpha\in T_1$. The first case is impossible because $T_0\cap T_1=\emptyset$.  The second case implies that $\alpha=\gamma^3\in T_1\cap T_0^{(3)}$. 
	By \eqref{eq:cT_0+cT_1}, $\gamma \alpha=\gamma^4$ belongs to $T_0^{(2)}$ or $T_1^{(2)}$. 
	
	If $\gamma^4\in T_1^{(2)}$, then $\gamma^2\in T_1$ which means $\gamma^2\in T_1 \cap T_0^{(2)}$. It follows that the coefficient of $e$ in $\hat{T}^{(2)}T_1$ is larger than $0$. However, $e\in Y_0=T_0\setminus \{\gamma,\gamma^{-1}\}$. This is a contradiction.
	
	If $\gamma^4\in T_0^{(2)}$, then $\gamma^2\in T_0\cap T_0^{(2)}$.  It contradicts Lemma \ref{le:T_0T_1_nece_condi} (c).
	
	Therefore, we have finished the proof that there exist no $T_0$ and $T_1$ satisfying \eqref{eq:T0T1} and \eqref{eq:T0^2+T1^2}. 	
\end{proof}
The main result, i.e.\ Theorem \ref{th:main} is a simple combination of Theorems \ref{th:n=1_even}, \ref{th:n=0_odd} and \ref{th:n=0_even}.

\section{Concluding remarks}\label{sec:concluding}
In this paper, we investigate the classification of linear Lee codes 
of packing radius $2$ and packing density $\frac{|S(n,2)|}{|S(n,2)|+1}$ in $\Z^n$. 
We call linear Lee codes with this density almost perfect. There are still 
several cases remaining open.
\begin{problem}\label{prob:classification}
	For $n\equiv 1,2,5\pmod{6}$, show that there is an almost perfect linear 
	Lee code of packing radius $2$ if and only if $n=2$.	
\end{problem}
If this problem is solved, then we get the complete nonexistence proof of APLL 
codes of packing radius $2$, for $n\geq 3$. As a direct consequence, Example \ref{ex:G_n=1,2} provides all possible APLL codes of packing radius $2$ up to isometry.

To solve Problem \ref{prob:classification}, one 
may try to follow the approach in \cite{leung_lattice_2020}  to investigate the 
coefficients of the elements in $\hat{T}^{(4)}T_0$ and $\hat{T}^{(4)}T_1$ 
modulo $5$. In a very recent work \cite{zhou_moreAPLL_2022}, this approach has been applied and Problem \ref{prob:classification} has been solved except for only a finite number of $n$.

One may naturally expect  a similar classification result provided that the packing density equals $\frac{|S(n,2)|}{|S(n,2)|+\epsilon }$, where $\epsilon$ is a small integer compared with $n$. However, it seems not an easy task, because the order of the group $G$ equals $|S(n,2)|+\epsilon=2n^2+2n+1+\epsilon$ which is determined by $\epsilon$ and some key steps of the proof should rely on the structure of $G$. One may compare the proof for $\epsilon=0$ and  for $\epsilon =1$ with $n\equiv 0\pmod{3}$ in \cite[Proposition 3.3]{leung_lattice_2020} and Theorems \ref{th:n=0_odd}, \ref{th:n=0_even}. The first one consists of only a half-page proof and the second one is much more complicated. Therefore, a small change of the order of $G$ could lead to a big change of the proof. 

Another interesting question is about quasi-perfect linear Lee codes. As we have mentioned in the introduction, 
the packing density of the 2-quasi-perfect Lee codes constructed in \cite{camarero_quasi-perfect_lee_2016,mesnager_2-correcting_2018} tends to $\frac{1}{2}$ when $n\rightarrow \infty$.
However, to the best of our knowledge, there is no construction with packing density tending to $1$ when $n\rightarrow \infty$. Therefore, we propose the following open problem.
\begin{problem}
	Find better upper and lower bounds of the packing density of $2$-quasi-perfect linear Lee codes in $\Z^n$  for infinitely many $n$.
\end{problem}

\section*{Acknowledgment}
The authors express their gratitude to the anonymous reviewers for their detailed and constructive comments which are very helpful to the improvement of the presentation of this paper.
Xiaodong Xu is partially supported by the Natural Science Foundation of China (No.\ 71471176). 
Yue Zhou is partially supported by the Natural Science Foundation of Hunan Province (No.\ 2019RS2031) and the Training Program for Excellent Young  Innovators of Changsha (No.\ kq2106006).

\end{document}